\renewcommand\labelenumi{(\roman{enumi})}
\renewcommand\theenumi\labelenumi
\newcommand\footnoteref[1]{\protected@xdef\@thefnmark{\ref{#1}}\@footnotemark}
\newtheorem{theorem}{Theorem}[section]
\newtheorem{proposition}[theorem]{Proposition}
\newtheorem{lemma}[theorem]{Lemma}
\newtheorem{corollary}[theorem]{Corollary}
\newtheorem{remark}[theorem]{Remark}
\theoremstyle{definition}
\newtheorem*{acks}{Acknowledgments}
\newcommand{\E}{\mathbb{E}}
\renewcommand{\P}{\mathbb{P}}
\newcommand{\R}{\mathbb{R}}
\newcommand{\cN}{\mathcal{N}}
\newcommand{\ie}{i.e.\@\xspace}
\newcommand{\eg}{e.g.\@\xspace}
\renewcommand{\tilde}{\widetilde}
\renewcommand{\d}[1]{\mathop{}\!\mathrm{d}#1}
\newcommand{\e}{\mathrm{e}}
\let\originalleft\left
\let\originalright\right
\renewcommand{\left}{\mathopen{}\mathclose\bgroup\originalleft}
\renewcommand{\right}{\aftergroup\egroup\originalright}
\title{Upper moderate deviation probabilities for the maximum of branching Brownian motion}
\author{Louis \textsc{Chataignier}\footnote{Institut de Mathématiques de Toulouse, UMR 5219, Université de Toulouse, CNRS, UPS, F-31062 Toulouse Cedex 9, France.}}
\date{\today}
\begin{document}

\maketitle

\begin{abstract}
    It is known from~\cite{Bramson1983} that the maximum of branching Brownian motion at time $t$ is asymptotically around an explicit function $m_t$, which involves a first ballistic order and a logarithmic correction.
    In this paper, we give an asymptotic equivalent for its upper moderate deviation probability, that is, the probability that the maximum achieves $m_t + x_t$ at time $t$, where $1 \ll x_t \ll t$.
    We adopt a probabilistic approach that employs a modified version of the second moment method.
    As a byproduct, we obtain information about the typical behavior of particles contributing to such deviations.
\end{abstract}

\section{Introduction}

The (binary) branching Brownian motion (BBM) is a continuous-time branching Markov process constructed as follows.
At time $t = 0$, a single particle starts a standard Brownian motion in $\R$ from the origin.
After a random time that follows an exponential distribution with parameter $1$, it splits into two, or equivalently, it dies and gives birth to two children.
These new particles then repeat the same process, independently of each other.
More precisely, the children perform independent Brownian motions from the position of their parent at its death and, after independent exponential times, they in turn split into two.
For a formal construction, see~\cite{Chauvin1991,Chataignier2024}.
BBM has been much studied since a deep connection with the F--KPP equation was made in~\cite{ItoMcKean1965, IkedaNagasawaWatanabe1968, McKean1975}.

Let $\cN_t$ denote the set of particles alive at time $t$ and $X_t(u)$ the position of particle $u$ at time $t$ or of its ancestor alive at time $t$.
We are interested in the maximal displacement $M_t = \max_{u \in \cN_t} X_t(u)$.
In~\cite{Bramson1983}, using that $u(t, x) = \P(M_t > x)$ is solution to the F--KPP equation, Bramson gave a precise estimate for $M_t$.
More specifically, let us define $m_t = \sqrt{2}t - \frac{3}{2\sqrt{2}} (\log t)_+$, where $y_+ = \max(y, 0)$ is the positive part of $y$.
Bramson showed that $M_t - m_t$ converges in distribution to some non-degenerate random variable as time goes to infinity.
The limit distribution was subsequently identified by Lalley and Sellke~\cite{LalleySellke1987}.
Then, in~\cite{ChauvinRouault1988}, Chauvin and Rouault obtained an asymptotic equivalent for the upper large deviation probability, \ie the probability that $M_t > m_t + xt$, with $x > 0$ fixed.
In~\cite{ChauvinRouault1990}, the same authors obtained an asymptotic equivalent for the probability that $M_t > \sqrt{2}t$.
In both works, they used the F--KPP equation.
Here, we extend the latter result by studying an intermediate regime between typical fluctuations and upper large deviations, called \emph{upper moderate deviation}.
It concerns the event where $M_t > m_t + x_t$, with $x_t$ any quantity that goes to infinity with time while being negligible compared to $t$.
Our main result Theorem~\ref{th:moderate_deviations} provides an asymptotic equivalent for the probability of this event.

Throughout this paper, the letters $t$, $s$, and $r$ will refer to some points in time $[0, \infty)$.
Given two functions $f \geq 0$ and $g > 0$, we will write $f(t) \ll g(t)$ if $\lim f(t)/g(t) = 0$, and $f(t) \sim g(t)$ if $\lim f(t)/g(t) = 1$.
It will also be convenient to write $f(t) \lesssim g(t)$ if there exists a constant $C$ such that $f(t) \leq C g(t)$ for any $t$.
Let us clarify that if $f$ and $g$ depend on an additional parameter $x$, then a statement of the form ``$\forall x, \forall t, f(t, x) \lesssim g(t, x)$'' shall be understood as ``$\exists C > 0, \forall x, \forall t, f(t, x) \leq C g(t, x)$''.
Given $x \in \R$ and $A \subseteq \R$, we will write $x \pm A = \{x \pm a : a \in A\}$.

Here is the main result of this article, which directly follows from Propositions~\ref{prop:expectation_equivalent} and~\ref{prop:equivalent_expectation} below.

\begin{theorem}\label{th:moderate_deviations}
    For any $(x_t)_{t \geq 0}$ satisfying $1 \ll x_t \ll t$ as $t \to \infty$, we have the asymptotic equivalent $\P(M_t > m_t + x_t) \sim C^* \gamma_t(x_t)$ as $t \to \infty$, where, for $x > 0$,
    \begin{equation}\label{eq:definition_gamma}
        \gamma_t(x) = x \e^{-\sqrt{2}x - \frac{x^2}{2t} + \frac{3}{2\sqrt{2}} \frac{x \log t}{t}} \text{ and } C^* = \lim_{\ell \to \infty} \sqrt{\frac{2}{\pi}} \int_0^\infty y \e^{\sqrt{2}y} \P \left( M_\ell > \sqrt{2}\ell + y \right) \d{y}.
    \end{equation}
\end{theorem}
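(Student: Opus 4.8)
My plan is to derive Theorem~\ref{th:moderate_deviations} from the two cited statements, Proposition~\ref{prop:expectation_equivalent} (which I read as ``$\E[\mathcal{Z}^{(\ell)}_t]\sim C^*\gamma_t(x_t)$'') and Proposition~\ref{prop:equivalent_expectation} (``$\P(M_t>m_t+x_t)\sim\E[\mathcal{Z}^{(\ell)}_t]$''); their combination is immediate, so the substance is to prove them, and I would do so by a modified (truncated) second moment method. Fix a large auxiliary time $\ell$ and let $\mathcal{Z}^{(\ell)}_t$ be a truncated first-moment quantity attached to the configuration at time $t-\ell$: informally, the number of particles $u$ with $X_t(u)>m_t+x_t$ whose ancestral trajectory on $[0,t-\ell]$ stays below a suitable barrier $\varphi_t$ — a Bramson-type curve, essentially the line from the origin to $(t,m_t+x_t)$, bent down logarithmically near time $t$ to match the correction in $m_t$ and lifted off the axis near time $0$. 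The role of the barrier is the usual one: on $\{M_t>m_t+x_t\}$ a ``typical'' extremal particle automatically respects $\varphi_t$, so the truncation costs nothing to leading order, whereas an untruncated count would be far too large — it sees the whole cloud of $\asymp t/x_t$ particles sitting above the level — and would have a second moment that is much too big for a direct Paley--Zygmund argument.

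For the first moment I would invoke the many-to-one lemma: $\E[\mathcal{Z}^{(\ell)}_t]$ equals $\e^{t-\ell}$ times the probability that a standard Brownian motion $(B_s)_{s\le t-\ell}$ stays below $\varphi_t$, multiplied (after conditioning on $B_{t-\ell}$) by the probability $\P(M_\ell>\sqrt2\ell+y)$ that an independent branching Brownian motion of length $\ell$ covers the residual gap $y:=m_t+x_t-\sqrt2\ell-B_{t-\ell}$. A Gaussian computation together with a ballot-type estimate — the probability that a Brownian bridge of slope close to $\sqrt2$ stays below the barrier, which is calibrated so that this probability carries a factor linear in $y$ and the constant $\sqrt{2/\pi}$ — should extract the profile $\gamma_t(x_t)$ (its $\e^{-x_t^2/(2t)}$ and $\e^{\frac{3}{2\sqrt2}x_t\log t/t}$ factors being, respectively, the Gaussian curvature at time $t-\ell$ and the logarithmic bend of $m_t$) against an $\e^{\sqrt2 y}$-weighted density in $y$. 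Integrating the branching Brownian motion tail against this weight then gives, up to $o(1)$, exactly $\sqrt{2/\pi}\int_0^\infty y\,\e^{\sqrt2 y}\,\P(M_\ell>\sqrt2\ell+y)\,\d{y}$, which converges to $C^*$ as $\ell\to\infty$. Bramson's refined tail bound $\P(M_\ell>\sqrt2\ell+y)\asymp(1+y)\ell^{-3/2}\e^{-\sqrt2 y-y^2/(2\ell)}$, valid for all $y>0$, is what guarantees $C^*\in(0,\infty)$ and what lets all error terms be taken uniformly in $x_t$ over the range $1\ll x_t\ll t$.

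For the second statement, $\P(M_t>m_t+x_t)\sim\E[\mathcal{Z}^{(\ell)}_t]$, I would combine Cauchy--Schwarz, $\P(\mathcal{Z}^{(\ell)}_t\ge 1)\ge\E[\mathcal{Z}^{(\ell)}_t]^2/\E[(\mathcal{Z}^{(\ell)}_t)^2]$, with a second-moment estimate $\E[(\mathcal{Z}^{(\ell)}_t)^2]=\E[\mathcal{Z}^{(\ell)}_t](1+o(1))$. Decomposing a pair of distinct contributing particles by the branch time $r$ of their most recent common ancestor $w$, the barrier forces $X_r(w)\le\varphi_t(r)$, so $w$ sits well below $\sqrt2 r$; the two lines of descent are then conditionally independent and each must still climb to $m_t+x_t$, and bounding each half by a barrier-constrained first moment and integrating against the density — exponentially small in the deficit $\sqrt2 r-X_r(w)$ — of such a low common ancestor makes the pair term $o(\E[\mathcal{Z}^{(\ell)}_t])$. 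Since $\E[\mathcal{Z}^{(\ell)}_t]\to 0$, this yields $\P(\mathcal{Z}^{(\ell)}_t\ge 1)\sim\E[\mathcal{Z}^{(\ell)}_t]$; together with the trivial lower bound $\P(M_t>m_t+x_t)\ge\P(\mathcal{Z}^{(\ell)}_t\ge 1)$ and the complementary upper bound $\P(M_t>m_t+x_t)\le\P(\mathcal{Z}^{(\ell)}_t\ge 1)+\P(\exists\,u\in\cN_t:X_t(u)>m_t+x_t,\ \text{path of }u\text{ crosses }\varphi_t\text{ on }[0,t-\ell])$, whose last term is controlled by a barrier-free first moment and shown to be $o(\gamma_t(x_t))$, we conclude after sending $t\to\infty$ and then $\ell\to\infty$.

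I expect the main obstacle to be the simultaneous, uniform calibration of the truncation: the barrier shape, the cushion sizes near times $0$ and $t$, and the window length $\ell$ must all be loose enough that no mass escapes $\{M_t>m_t+x_t\}$ and that $\E[\mathcal{Z}^{(\ell)}_t]$ is genuinely $C^*\gamma_t(x_t)(1+o(1))$, yet tight enough that the pairing term in the second moment is negligible — and, because $C^*$ only emerges after the double limit ($t\to\infty$, then $\ell\to\infty$), every estimate must carry errors uniform both in $x_t$ and in $\ell$. This is what forces the Bramson-type control of $\P(M_\ell>\sqrt2\ell+y)$ to be used over the whole half-line in $y$ rather than on a bounded window, and it is what requires the sub-regimes $x_t\ll\sqrt t$, $x_t\asymp\sqrt t$, and $\sqrt t\ll x_t\ll t$ — where the $\e^{-x_t^2/(2t)}$ and $\e^{\frac{3}{2\sqrt2}x_t\log t/t}$ corrections play different roles — to be absorbed into one argument.
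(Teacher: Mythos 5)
Your strategy coincides with the paper's: truncate at time $t-\ell$ behind a barrier, compute the first moment by many-to-one plus a ballot estimate to extract $\sqrt{2/\pi}\,\gamma_t(x_t)\int_0^\infty y\e^{\sqrt{2}y}\P\left(M_\ell>\frac{m_t}{t}\ell+y\right)\d{y}$, kill the pair term through the many-to-two/most-recent-common-ancestor decomposition, handle barrier-crossing particles by a separate first-moment estimate, and conclude by Cauchy--Schwarz before sending $t\to\infty$ and then $\ell\to\infty$ (the paper's straight barrier $g_t(s)=\frac{m_t}{t}s+x_t$ already suffices; no bending is needed). One caution on the definition: the count must be indexed by the particles of $\cN_{t-\ell}$, each contributing the indicator that \emph{some} descendant exceeds $m_t+x_t$ at time $t$ --- which is what your first-moment formula with the factor $\P(M_\ell>\sqrt{2}\ell+y)$ implicitly assumes. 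If $\mathcal{Z}^{(\ell)}_t$ literally counted time-$t$ particles above the level (as your informal description says), pairs branching during the last $\ell$ units of time would contribute on the same order as the first moment, the relation $\E[(\mathcal{Z}^{(\ell)}_t)^2]\sim\E[\mathcal{Z}^{(\ell)}_t]$ would fail, and the first moment would produce an expected particle count over the last window rather than $\P(M_\ell>\cdot)$, hence a different constant.

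The genuine gap is the existence of the limit defining $C^*$. Your justification --- Bramson's two-sided bound $\P(M_\ell>\sqrt{2}\ell+y)\asymp(1+y)\ell^{-3/2}\e^{-\sqrt{2}y-y^2/(2\ell)}$ --- only yields $0<\liminf_\ell v_\ell\le\limsup_\ell v_\ell<\infty$ for $v_\ell=\sqrt{2/\pi}\int_0^\infty y\e^{\sqrt{2}y}\P(M_\ell>\sqrt{2}\ell+y)\d{y}$; it does not show that $v_\ell$ converges, and without that convergence the assertion $\P(M_t>m_t+x_t)\sim C^*\gamma_t(x_t)$ is not established (indeed $C^*$ is not even well defined). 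The paper closes this point internally: writing $u(t)=\P(M_t>m_t+x_t)/\gamma_t(x_t)$ and $v(t,\ell)$ for the integral with $\P(M_\ell>\frac{m_t}{t}\ell+y)$, the equivalent $u(t)\sim v(t,\ell_t)$ holds for \emph{every} admissible slowly diverging $(\ell_t)$, while $u$ is bounded above (Lemma~\ref{lem:upper_bound}, proved probabilistically) and below (the second-moment Lemma~\ref{lem:lower_bound}); an argument by contradiction then gives $\lim_{\ell\to\infty}\limsup_{t\to\infty}|u(t)-v(t,\ell)|=0$, and since $v(t,\ell)\to v_\ell$ as $t\to\infty$ for each fixed $\ell$, both $\lim_{t}u(t)$ and $\lim_{\ell}v_\ell$ exist and coincide --- this is how $C^*$ is constructed, with $C^*\in(0,\infty)$ coming from the bounds on $u$ rather than from any refined tail asymptotics for $M_\ell$. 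Your closing remark about errors uniform in $\ell$ points in the right direction, but it must be converted into such a Moore--Osgood-type argument; as written, the convergence to $C^*$ is asserted rather than proved, and relying on Bramson's sharp estimate would also give up the self-contained character that the paper's probabilistic route is designed to achieve.
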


Since the theorem is stated for general choices of $(x_t)_{t \geq 0}$, an argument by contradiction shows that the equivalent holds uniformly for $x_t$, in the following sense.

\begin{corollary}\label{cor:moderate_deviation}
    For any $(a_t)_{t \geq 0}$ and $(b_t)_{t \geq 0}$ such that $1 \ll a_t < b_t \ll t$ as $t \to \infty$,
    \begin{equation*}
        \lim_{t \to \infty} \sup_{x \in [a_t, b_t]} \left| \frac{\P(M_t > m_t + x)}{C^* \gamma_t(x)} - 1 \right| = 0.
    \end{equation*}
\end{corollary}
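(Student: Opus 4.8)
The plan is to argue by contradiction, exploiting the fact that Theorem~\ref{th:moderate_deviations} applies to \emph{every} sequence $(x_t)_{t \geq 0}$ with $1 \ll x_t \ll t$, not just to a fixed one. Suppose the claimed uniform limit fails. Then there exists $\varepsilon > 0$ and a sequence of times $t_n \to \infty$ together with points $x_n \in [a_{t_n}, b_{t_n}]$ such that
\begin{equation*}
    \left| \frac{\P(M_{t_n} > m_{t_n} + x_n)}{C^* \gamma_{t_n}(x_n)} - 1 \right| \geq \varepsilon \quad \text{for all } n.
\end{equation*}
The idea is to manufacture from the $x_n$ a single ``bad'' sequence $(x_t)_{t \geq 0}$ defined for all $t$, agreeing with $x_n$ at $t = t_n$, and still satisfying $1 \ll x_t \ll t$; then Theorem~\ref{th:moderate_deviations} applied to this sequence forces the ratio to tend to $1$, contradicting the displayed inequality along $t_n$.

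The main step is therefore the interpolation/extension: one must define $x_t$ for $t \notin \{t_n\}$. A clean way is to set, for each $t$, $x_t = x_n$ when $t = t_n$, and otherwise choose $x_t$ to be any value in $[a_t, b_t]$ — for instance $x_t = a_t$ — so that automatically $1 \ll a_t \le x_t \le b_t \ll t$ holds for \emph{all} $t$, hence certainly $1 \ll x_t \ll t$. One should first pass to a subsequence of the $t_n$ if necessary to ensure they are distinct and increasing, so the assignment at the points $t_n$ is unambiguous. Since $a_t \le x_n \le b_t$ is exactly the hypothesis $x_n \in [a_{t_n}, b_{t_n}]$, the sequence $(x_t)$ constructed this way indeed lies in the moderate-deviation regime, and Theorem~\ref{th:moderate_deviations} yields
\begin{equation*}
    \lim_{t \to \infty} \frac{\P(M_t > m_t + x_t)}{C^* \gamma_t(x_t)} = 1,
\end{equation*}
in particular along the subsequence $t = t_n$, where $x_{t_n} = x_n$. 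This contradicts the inequality above, completing the proof.

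The only point requiring a little care is to make sure the constructed sequence is genuinely admissible, i.e. that $x_t \ll t$ and $x_t \gg 1$ as $t \to \infty$; but this is immediate because $a_t \le x_t \le b_t$ for every $t$ and $1 \ll a_t$, $b_t \ll t$ by assumption. No quantitative control on $C^* \gamma_t(x)$ is needed — the whole argument is soft and uses Theorem~\ref{th:moderate_deviations} as a black box — so I do not anticipate any real obstacle beyond bookkeeping with the subsequence. One could alternatively phrase the argument using sequential compactness of the ratio directly, but the explicit construction above is the most transparent.
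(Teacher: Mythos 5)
Your proposal is correct and is exactly the argument the paper has in mind: the paper itself only remarks that ``an argument by contradiction shows'' the uniformity, and your explicit construction (extend the bad points $x_n\in[a_{t_n},b_{t_n}]$ to a full admissible family by setting $x_t=a_t$ elsewhere, then apply Theorem~\ref{th:moderate_deviations} to that family) is precisely that argument spelled out. The only cosmetic point is that the failure of the limit gives $\sup_{x}|\cdot|\geq\varepsilon$ along $t_n$, so you should pick $x_n$ achieving, say, $\varepsilon/2$ rather than $\varepsilon$; this changes nothing in the contradiction.
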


As observed by Bovier and Hartung in~\cite{BovierHartung2020}, it is possible to adapt the proofs of~\cite[Proposition~4.4]{ArguinBovierKistler2013} and~\cite[Proposition~3.1]{BovierHartung2014} to show Theorem~\ref{th:moderate_deviations}.
Such an argument relies on a sharp estimate for solutions to the F--KPP equation.
One can find this estimate in~\cite[Proposition~4.3]{ArguinBovierKistler2013}, which is itself based on~\cite[Proposition~8.3]{Bramson1983}.
Mind that \cite[Proposition~2.1]{BovierHartung2020} is, however, incorrect. Indeed, it is stated for the regime $0 < x_t \ll t$, whereas one must restrict to $1 \ll x_t \ll t$ (see \eg~\cite{LalleySellke1987} for $x_t$ constant).
Moreover, the statement is missing the term in $(x_t/t)(\log t)$ in the exponential, which is essential unless one assumes $x_t \ll t/\log t$.

In this work, we propose a probabilistic approach that employs a modified version of the second moment method.
We adapt arguments from Bramson, Ding, and Zeitouni~\cite{BramsonDingZeitouni2016}, where the authors studied the asymptotic behavior of the maximum of branching random walk.
A first benefit of this approach is to give some understanding of the process when it performs upper moderate deviation (see Remark~\ref{rem:byproduct}), especially concerning the trajectories of the particles.
Another one is to establish a method that applies to other models, such as branching random walk.
Finally, our proof is fairly self-contained: it only requires the many-to-one and many-to-two lemmas as well as some Brownian estimates.

\begin{remark}\label{rem:byproduct}
    Here we summarize what can be inferred, as a byproduct of our analysis, about the typical behavior of a particle performing moderate deviation.
    Proposition~\ref{prop:upper_bound_G} together with Lemma~\ref{lem:lower_bound} indicates that, with high probability, such a particle stays below the line $g(s) = (m_t/t)s+x_t$ until time $s = t-\ell$, where $\ell$ should be thought of as large but constant in $t$ (in the proofs, we take $\ell = \ell_t$ going to infinity arbitrarily slowly as $t \to \infty$).
    What is more, Lemma~\ref{lem:equivalent_Lambda_Lambda2} implies that particles performing moderate deviation at time $t$ all come from the same ancestor at time $t-\ell$ (see Remark~\ref{rem:equivalent_Lambda_Lambda2}).
    Finally, Lemma~\ref{lem:equivalent_expectation} tells that this ancestor is at a distance of order $\sqrt{\ell}$ from $g(t-\ell)$ at time $t-\ell$.
    This is not surprising since a Brownian bridge of length $t$ conditioned to stay below a line for most of its lifespan, actually stays at least at a distance of order $\min(\sqrt{s},\sqrt{t-s})$ below this line.
    This phenomenon of \emph{entropic repulsion} was already present in Bramson's work~\cite{Bramson1978} and is precisely formulated in~\cite[Theorem~2.3]{ArguinBovierKistler2011}.
\end{remark}

\section{Many-to-few lemmas}

From now on, we consider $(B_t)_{t \geq 0}$ a standard Brownian motion independent of the BBM.
The following many-to-one lemma is a fundamental tool to compute first moments for certain functionals of the trajectories.

\begin{lemma}[Many-to-one]\label{lem:many-to-one}
    For any $t \geq 0$ and any non-negative measurable function $F$,
    \begin{equation*}
        \E \left[ \sum_{u \in \cN_t} F((X_s(u))_{s \leq t}) \right] = \e^t \E \left[ F((B_s)_{s \leq t}) \right].
    \end{equation*}
\end{lemma}

We will also need a many-to-two lemma to compute some second moments.
Here, we state a generalization of~\cite[Lemma~10]{Bramson1978}.

\begin{lemma}[Many-to-two]\label{lem:many-to-two}
    For any $t \geq 0$ and any non-negative measurable function $F$,
    \begin{equation*}
        \E \left[ \sum_{u, v \in \cN_t, u \neq v} F((X_s(u))_{s \leq t}, (X_s(v))_{s \leq t}) \right] = 2 \int_0^t \e^{2t-s} \E \left[ F((B_r^{1,s})_{r \leq t}, (B_r^{2,s})_{r \leq t}) \right] \d{s},
    \end{equation*}
    where $(B_r^{1,s})_{r \geq 0}$ and $(B_r^{2,s})_{r \geq 0}$ are two Brownian motions that coincide until time $s$ and have independent increments afterward.
\end{lemma}

See \eg~\cite{Chataignier2024} for proofs of Lemmas~\ref{lem:many-to-one} and~\ref{lem:many-to-two}.

\section{Brownian estimates}

Classically, for any $x_1, x_2 > 0$ and any $t > 0$, we have the following measure equality on the domain $\{y \in \R : y < x_2\}$,
\begin{equation}\label{eq:brownian_estimate_0}
    \P \left( \forall s \leq t, B_s \leq \frac{t-s}{t}x_1 + \frac{s}{t}x_2, B_t \in \d{y} \right) = \left( 1 - \e^{-2x_1(x_2-y)/t} \right) \frac{\e^{-y^2/2t}}{\sqrt{2 \pi t}} \d{y}.
\end{equation}
This is a simple consequence of~\cite[Lemma~2]{Bramson1978} and of the fact that a Brownian motion is the sum of~$(\frac{s}{t}B_t)_{s \leq t}$ and an independent Brownian bridge.
Note that, on the complementary domain $\{y \in \R : y \geq x_2\}$, the measure on the left-hand side is just zero.
From Equation~\ref{eq:brownian_estimate_0}, we deduce the following upper and lower bounds.

\begin{lemma}\label{lem:brownian_estimate_1}
    \begin{enumerate}
        \item For any $x_1, x_2 > 0$ and any $t > 0$,
        \begin{equation*}
            \P \left( \forall s \leq t, B_s \leq \frac{t-s}{t}x_1 + \frac{s}{t}x_2, B_t \in \d{y} \right) \lesssim \frac{x_1(x_2-y)}{t^{3/2}} \e^{-y^2/2t} \d{y},
        \end{equation*}
        on the domain $\{y \in \R : y < x_2\}$.
        \item For any $x_1, x_2 > 0$ and any $t > 0$,
        \begin{equation*}
            \P \left( \forall s \leq t, B_s \leq \frac{t-s}{t}x_1 + \frac{s}{t}x_2, B_t \in \d{y} \right) \gtrsim \frac{x_1(x_2-y)}{t^{3/2}} \e^{-y^2/2t} \d{y},
        \end{equation*}
        on the domain $\{y \in \R : y < x_2 \text{ and } x_1(x_2-y) \leq t\}$.
    \end{enumerate}
\end{lemma}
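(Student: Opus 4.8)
The plan is to derive both bounds directly from the exact formula~\eqref{eq:brownian_estimate_0}, so the entire argument reduces to estimating the factor $1 - \e^{-2x_1(x_2-y)/t}$ from above and below. Write $a = 2x_1(x_2-y)/t$, which is nonnegative on the domain $y < x_2$. For the upper bound (i), I would use the elementary inequality $1 - \e^{-a} \leq a$, valid for all $a \geq 0$; substituting gives
\begin{equation*}
    \left( 1 - \e^{-2x_1(x_2-y)/t} \right) \frac{\e^{-y^2/2t}}{\sqrt{2 \pi t}} \leq \frac{2x_1(x_2-y)}{t} \cdot \frac{\e^{-y^2/2t}}{\sqrt{2 \pi t}} = \frac{2}{\sqrt{2\pi}} \cdot \frac{x_1(x_2-y)}{t^{3/2}} \e^{-y^2/2t},
\end{equation*}
which is exactly the claimed estimate with the implicit constant $C = \sqrt{2/\pi}$. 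Note this bound holds on the whole domain $\{y < x_2\}$ with no extra restriction, as required.

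For the lower bound (ii), I would use that there is a constant $c > 0$ with $1 - \e^{-a} \geq c\,a$ for all $a \in [0, 2]$ — for instance $c = (1 - \e^{-2})/2$, since $a \mapsto (1-\e^{-a})/a$ is decreasing on $(0,\infty)$ and equals $(1-\e^{-2})/2$ at $a = 2$. The hypothesis $x_1(x_2 - y) \leq t$ in the statement of (ii) ensures precisely that $a = 2x_1(x_2-y)/t \leq 2$, so this linear lower bound applies, and substituting into~\eqref{eq:brownian_estimate_0} yields
\begin{equation*}
    \left( 1 - \e^{-2x_1(x_2-y)/t} \right) \frac{\e^{-y^2/2t}}{\sqrt{2 \pi t}} \geq c \cdot \frac{2x_1(x_2-y)}{t} \cdot \frac{\e^{-y^2/2t}}{\sqrt{2 \pi t}} = \frac{2c}{\sqrt{2\pi}} \cdot \frac{x_1(x_2-y)}{t^{3/2}} \e^{-y^2/2t},
\end{equation*}
which is the desired inequality with implicit constant $\frac{2c}{\sqrt{2\pi}} = \frac{1 - \e^{-2}}{\sqrt{2\pi}}$.

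There is essentially no obstacle here: the only thing to be slightly careful about is the uniformity convention spelled out in the introduction, namely that the implicit constants must not depend on $x_1$, $x_2$, $t$, or $y$. Both constants produced above are absolute, so this is automatic. One should also record that~\eqref{eq:brownian_estimate_0} already guarantees the left-hand measure vanishes on $\{y \geq x_2\}$, so restricting attention to $\{y < x_2\}$ (and, for (ii), further to $\{x_1(x_2-y) \leq t\}$) loses nothing. If one prefers to avoid invoking monotonicity of $(1-\e^{-a})/a$, the bound $1 - \e^{-a} \geq a\e^{-a} \geq a\e^{-2}$ on $[0,2]$ works just as well and is immediate from convexity of $\e^{-a}$. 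This completes the proof.
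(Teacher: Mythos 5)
Your proof is correct and matches the paper's intended argument: the paper gives no separate proof of this lemma precisely because it follows from the exact identity~\eqref{eq:brownian_estimate_0} via the elementary bounds $1-\e^{-a}\leq a$ and $1-\e^{-a}\gtrsim a$ for $a\leq 2$, which is exactly what you do, with correct absolute constants.
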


In the next section, we will also need to deal with a slight modification of the barrier.

\begin{lemma}\label{lem:brownian_estimate_2}
    Fix any $\alpha \in (0, 1/2)$ and $K > 0$.
    Define $\delta_t(s) = K \min(s^\alpha, (t-s)^\alpha)$.
    Then there exist $C = C(\alpha, K) > 0$ and $t_0 = t_0(\alpha, K) > 0$ such that, for any $x_1, x_2 > 0$ and any $t \geq t_0$,
    \begin{equation*}
        \P \left( \forall s \leq t, B_s \leq \frac{t-s}{t}x_1 + \frac{s}{t}x_2 + \delta_t(s), B_t \in \d{y} \right) \leq C \frac{(x_1+1)(x_2-y+1)}{t^{3/2}} \e^{-y^2/2t} \d{y},
    \end{equation*}
    on the domain $\{y \in \R : y < x_2\}$.
\end{lemma}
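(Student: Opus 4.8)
The plan is to deduce Lemma~\ref{lem:brownian_estimate_2} from the unperturbed estimate in part~(i) of Lemma~\ref{lem:brownian_estimate_1} by splitting the time interval $[0,t]$ and absorbing the bump $\delta_t(s) = K\min(s^\alpha, (t-s)^\alpha)$ into a modified affine barrier on each piece. The key observation is that on $[0, t/2]$ one has $\delta_t(s) \le K s^\alpha$, and the concave function $s \mapsto K s^\alpha$ lies below the straight line joining $(0, 0)$ to $(t/2, \cdot)$ only in the wrong direction, so instead I would use that $\delta_t(s) \le K(t/2)^\alpha \cdot (2s/t)^{\alpha}$ is dominated, up to the tangent-line trick, by an affine function $c_0 + c_1 s$ with $c_0, c_1 \gtrsim t^{\alpha - 1}$ on a suitable sub-interval; more robustly, since $s^\alpha \le \varepsilon s/t_0 + C_\varepsilon t_0^\alpha$ type bounds are too lossy, the cleanest route is a dyadic decomposition of $[0,t]$ at the midpoint and then recursively at scales $2^{-k} t$.

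Concretely, I would first reduce to $y \le x_2 - 1$ (say), the region $x_2 - 1 < y < x_2$ being handled trivially since there $x_2 - y + 1 \le 2$ and the left-hand side is bounded by the Gaussian density times a constant. Next, condition on the value $B_{t/2} = z$: the event on $[0, t/2]$ requires $B_s \le \frac{t/2-s}{t/2} x_1 + \frac{s}{t/2} x_2' + \delta_t(s)$ for an appropriate $x_2'$, and on $[t/2, t]$ a symmetric statement. On $[0,t/2]$, bound $\delta_t(s) \le K s^\alpha$; the point is that $K s^\alpha$ is, on $[0, t/2]$, below the affine function $\ell(s) = K (t/2)^{\alpha - 1} s + K (t/2)^\alpha$ — no wait, I need an upper bound, and concavity gives $K s^\alpha \le K(t/2)^\alpha + K \alpha (t/2)^{\alpha-1}(s - t/2)$, which is an affine upper bound with intercept $K(1-\alpha)(t/2)^\alpha > 0$ at $s=0$ and value $K(t/2)^\alpha$ at $s = t/2$. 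Thus on $[0,t/2]$ the perturbed barrier is dominated by an affine barrier from height $x_1 + K(1-\alpha)(t/2)^\alpha$ at time $0$ to height $x_2' + K(t/2)^\alpha$ at time $t/2$, and Lemma~\ref{lem:brownian_estimate_1}(i) applies with these shifted endpoints, at the cost of replacing $x_1$ by $x_1 + O(t^\alpha)$. Iterating this dyadically — on $[0, t/2^k]$ the bump is at most $K(t/2^k)^\alpha$-order — the accumulated vertical shift after $\log_2 t$ steps is a convergent geometric-type series $\sum_k (t/2^k)^\alpha = t^\alpha \sum_k 2^{-k\alpha} = O(t^\alpha)$ since $\alpha > 0$, and crucially $t^\alpha \ll t^{1}$ so the parabolic curvature term $\e^{-y^2/2t}$ is not disturbed beyond a constant once $t \ge t_0$.

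After integrating the two half-interval estimates over the intermediate point $z$ (and then over the $2^k$-fold intermediate points in the iterated version), one gets convolutions of the form $\int \frac{(x_1 + O(t^\alpha))(x_2' - z + O(t^\alpha))}{(t/2)^{3}} \e^{-z^2/t} \cdots \d z$; the Gaussian-against-linear integrals reproduce the same shape with $t/2 \to t$, and the factor $(x_1 + O(t^\alpha))(x_2 - y + O(t^\alpha)) \le C(x_1+1)(x_2 - y + 1)\,\Theta(t)$ where $\Theta(t)$ collects the powers of $t^\alpha/t = t^{\alpha - 1} \to 0$; choosing $t_0$ large enough makes $\Theta(t) \le C$. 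The endpoint shift $x_2' \ne x_2$ (the affine barrier on $[0,t/2]$ does not end at $x_2$ but at $B_{t/2} = z$) is handled by the standard observation that $x_2 - z$ appears naturally as the relevant gap, and after the $z$-integration one recovers $x_2 - y$.

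The main obstacle I anticipate is bookkeeping the endpoint values through the decomposition: Lemma~\ref{lem:brownian_estimate_1}(i) is stated for a barrier interpolating between the \emph{fixed} endpoints $x_1$ at time $0$ and $x_2$ at time $t$, whereas after conditioning at $t/2$ the relevant "$x_2$" becomes the running value $B_{t/2}$, which can itself be large (of order $\sqrt t$), so one must be careful that the error $O(t^\alpha)$ stays negligible relative to \emph{all} the quantities involved and that the product of the two half-interval bounds, after the Gaussian convolution, collapses to the claimed single factor $(x_1+1)(x_2-y+1)/t^{3/2} \cdot \e^{-y^2/2t}$ rather than leaving a spurious extra factor of $(1 + |B_{t/2}|)$. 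This is a routine but delicate Gaussian computation; the dyadic summation $\sum_k 2^{-k\alpha} < \infty$, valid precisely because $\alpha > 0$ (and $\alpha < 1/2$ is not even needed for the upper bound, only convenient), is what makes the whole scheme close.
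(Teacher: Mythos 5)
Your reduction to Lemma~\ref{lem:brownian_estimate_1}(i) breaks down at the step where you absorb the bump into the affine barrier. Dominating $Ks^\alpha$ on $[0,t/2]$ by its tangent line at $t/2$ (or dyadically at the scales $t/2^k$) replaces $\delta_t$ by a vertical shift of order $t^\alpha$, so after applying the straight-barrier estimate you get a factor of the form $(x_1+c\,t^\alpha)(x_2-y+c\,t^\alpha)$, not $(x_1+1)(x_2-y+1)$. Your claim that $(x_1+O(t^\alpha))(x_2-y+O(t^\alpha))\leq C(x_1+1)(x_2-y+1)\Theta(t)$ with $\Theta$ bounded is false: in the regime that actually matters in the paper (Lemma~\ref{lem:upper_bound}, where $x_1\asymp x$ but $x_2-y$ is of order $1$), the ratio is of order $t^{\alpha}$ or $t^{2\alpha}$ and diverges. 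The dyadic iteration does not repair this, since $\sum_k(t/2^k)^\alpha=O(t^\alpha)$ is still polynomially large in $t$; the issue is not that the shift must be $\ll t$ (so that the Gaussian factor survives), but that it must be $O(1)$ compared with the endpoint gaps $x_1$ and $x_2-y$, which can themselves be of order $1$. With the weaker bound $(x_1+t^\alpha)(x_2-y+t^\alpha)/t^{3/2}$ the downstream sums in Section~3 no longer close.

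The telltale sign is your final remark that $\alpha<1/2$ ``is not even needed for the upper bound'': the lemma is in fact false for $\alpha\in(1/2,1)$. Take $x_1=x_2=1$ and $y$ near $0$: in the bulk the barrier $1+\min(s,t-s)^\alpha$ then exceeds the typical fluctuation $\sqrt{\min(s,t-s)}$ of the bridge, so the constraint only costs a constant and the left-hand side is of order $t^{-1/2}\e^{-y^2/2t}\d{y}$, whereas the right-hand side is of order $t^{-3/2}\d{y}$. Any correct proof must therefore use $\alpha<1/2$ in an essential way, by comparing the bump with the fluctuations of the path conditioned to stay below the straight line — e.g.\ via a decomposition at the first crossing of the straight barrier, or, as the paper does, by invoking~\cite[Lemma~2.7]{KimLubetzkyZeitouni2023} when $x_1(x_2-y)\leq t$ and simply dropping the barrier (so that the Gaussian density alone suffices) when $x_1(x_2-y)>t$. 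Since your scheme never uses $\alpha<1/2$, it cannot prove the stated bound.
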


In the case $x_1(x_2 - y) \leq t$, Lemma~\ref{lem:brownian_estimate_2} is a direct consequence of~\cite[Lemma~2.7]{KimLubetzkyZeitouni2023}.
It suffices to drop the barrier to get the case $x_1(x_2 - y) > t$.

\section{A first upper bound}

Recall that $\gamma_t(x)$ is defined in~\eqref{eq:definition_gamma}.
We begin with the following upper bound.

\begin{lemma}\label{lem:upper_bound}
    For any $x \geq 1$ and any $t \geq 2$, we have $\P(M_t > m_t + x) \lesssim \gamma_t(x)$.
\end{lemma}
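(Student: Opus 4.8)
The plan is to control $\P(M_t > m_t + x)$ by a first-moment (union-bound) argument, but with a twist: a naive first moment over $\{u \in \cN_t : X_t(u) > m_t + x\}$ gives a bound that is too large by a factor of order $t$ (this is the classical difficulty with the maximum of BBM and BRW). To fix this, I would intersect with a barrier event that forces the trajectory of any particle reaching height $m_t + x$ to stay below an interpolating line (plus a small correction) for all earlier times. Concretely, set $m_t(x) = m_t + x$ and consider the line $s \mapsto \frac{t-s}{t}\cdot 0 + \frac{s}{t}(m_t + x) = \frac{s}{t}(m_t+x)$; let $Z_t$ be the number of particles $u \in \cN_t$ with $X_t(u) > m_t + x$ whose ancestral trajectory stays below this line (possibly shifted by a $\delta_t(s) = K\min(s^\alpha,(t-s)^\alpha)$-type buffer at intermediate times, to absorb fluctuations). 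The first step is to argue that $\P(M_t > m_t + x) \le \P(M_t > m_t + x,\ \text{barrier holds}) + \P(\text{barrier fails})$, and that the second term is negligible — of smaller order than $\gamma_t(x)$ — by Bramson-type estimates on the localization of the extremal trajectory (or, at this stage, one can use the cruder bound that $\P(M_t > m_t + x)$ is in any case at most a constant, and the barrier-failure probability is controlled because on the event $M_t > m_t + x$ the maximizing particle does not exceed its final value by much at earlier times; alternatively, take the buffer $\delta_t$ large enough that failure forces some particle to deviate far above the line).

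The second step is the first-moment computation for $\E[Z_t]$. By the many-to-one formula, $\E[Z_t]$ equals $\e^{t}$ times the probability that a single Brownian motion $(B_s)_{s \le t}$ satisfies $B_s \le \frac{s}{t}(m_t+x) + \delta_t(s)$ for all $s \le t$ and $B_t > m_t + x$. Writing $B_t = m_t + x + z$ with $z \ge 0$ and applying Lemma~\ref{lem:brownian_estimate_2} (with $x_1 = 0$ — or a harmless $x_1 = 1$ after adjusting constants — and $x_2 = m_t + x$, after a time-reversal to put the ``pinched'' endpoint where the barrier is tight), one gets that the relevant density is bounded by a constant times $\frac{(m_t + x - y + 1)}{t^{3/2}} \e^{-y^2/2t}\,\d y$ evaluated near $y = m_t + x$; integrating over $z \ge 0$ against $\e^{t}$ and using $m_t = \sqrt 2 t - \frac{3}{2\sqrt2}\log t$ produces, after collecting the Gaussian exponent $-\frac{(m_t+x)^2}{2t}$ and the $\e^{t}$ prefactor, exactly the expression $\gamma_t(x) = x\,\e^{-\sqrt 2 x - x^2/(2t) + \frac{3}{2\sqrt2}\frac{x\log t}{t}}$ up to a multiplicative constant. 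The factor of $x$ (rather than $x+1$ or $1$) comes from the linear-in-$(x_2 - y)$ factor in the Brownian estimate combined with the range of integration; one should check the regime $x \ge 1$, $t \ge 2$ is what makes all the lower-order terms ($+1$'s, etc.) absorbable into the constant.

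The third step is to conclude $\P(M_t > m_t + x) \le \P(\text{barrier holds and } M_t > m_t+x) + \P(\text{barrier fails}) \le \E[Z_t] + (\text{negligible}) \lesssim \gamma_t(x)$, using Markov's inequality on $Z_t$. I expect the main obstacle to be the barrier step: one must choose the shape of the intermediate correction $\delta_t(s)$ and the endpoint treatment so that (a) the barrier-failure probability is genuinely $o(\gamma_t(x))$ uniformly in $x \ge 1$, $t \ge 2$, which is where the exponent $\alpha \in (0,1/2)$ and Lemma~\ref{lem:brownian_estimate_2} are needed, and (b) the barrier is still loose enough that it does not cut into the main contribution, so that the first-moment bound remains of the right order $\gamma_t(x)$ and not smaller. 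Getting these two requirements to hold simultaneously, with constants uniform over the stated range of $(x,t)$, is the delicate part; everything else is the Gaussian bookkeeping in Step 2.
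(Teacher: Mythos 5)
There is a genuine gap, and it sits exactly where you flagged the difficulty: the choice of barrier and the treatment of the barrier-failure event. Your chord $s \mapsto \frac{s}{t}(m_t+x)$ is anchored at height $0$ at time $0$, and this is fatal on both of your requirements (a) and (b) simultaneously. For (a): the unconditional probability that some particle crosses this line (even with a buffer $K\min(s^\alpha,(t-s)^\alpha)$) is of order one, since at times $s=O(1)$ the line plus buffer is $O(1)$ high; and the refined event $\{\text{barrier fails}\}\cap\{M_t>m_t+x\}$ is not $o(\gamma_t(x))$ either — it in fact carries the bulk of the probability. That is because of (b): by Lemma~\ref{lem:brownian_estimate_2}, the first moment of your $Z_t$ comes with the factor $(x_1+1)(x_2-y+1)/t^{3/2}$ where $x_1+1=O(1)$ (your barrier starts at $0$) and $x_2-y=O(1)$ near the tip, so $\E[Z_t]\asymp \gamma_t(x)/x$, short of the truth by a factor $x$. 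Your claim that the factor $x$ in $\gamma_t(x)$ comes from the $(x_2-y)$ term is incorrect: near the endpoint that term is $O(1)$; in the correct bookkeeping the $x$ comes from the \emph{starting height} of the barrier, i.e.\ from $x_1\approx x$. Since the true probability is $\asymp x\,\e^{-\sqrt2 x-\cdots}$ (that is the content of Theorem~\ref{th:moderate_deviations} and Lemma~\ref{lem:lower_bound}), a barrier whose first moment is only $\gamma_t(x)/x$ necessarily pushes the main mass into the "barrier fails" term, which you have no tool to bound. The fallback you suggest — that on $\{M_t>m_t+x\}$ the extremal trajectory stays near the chord, by "Bramson-type estimates" — begs the question: that localization statement is precisely Proposition~\ref{prop:upper_bound_G}, whose proof uses Corollary~\ref{cor:upper_bound_1}/\ref{cor:upper_bound_2}, i.e.\ the very lemma you are proving; and importing Bramson's F--KPP estimates defeats the self-contained probabilistic purpose.

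The paper avoids a "negligible complement" step altogether. It uses the barrier $f_t(s)=m_t-m_{t-s}+x$, which starts at height $x$ (not $0$) and satisfies $f_t(t)=m_t+x$, so the event $\{M_t>m_t+x\}$ is entirely covered by the union over $k$ of "some particle stays below $f_t$ up to time $k$ and first crosses it during $[k,k+1]$". Each such term is bounded by the union bound, many-to-one, the Markov property at time $k$, Girsanov, and the ballot-type estimate of Lemma~\ref{lem:brownian_estimate_2} (the tilted barrier $\tilde f_t$ is a perturbed straight line of starting height $x$, whence the factor $x$), producing $\gamma_t(x)\,t^{3/2}/((t-k)^{3/2}(k+1)^{3/2})$ up to constants, which is summable in $k$. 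If you want to salvage your plan, you must raise your barrier to start at height $\approx x$ and replace the "failure is negligible" step by such a first-crossing-time decomposition. Two smaller issues: the endpoint constraint $B_t>x_2$ lies outside the domain $y<x_2$ of Lemma~\ref{lem:brownian_estimate_2}, so the barrier must be stopped slightly before $t$ (as the paper does via the last interval $[k,k+1]$); and the regime $x\ge t$ needs a separate, elementary many-to-one bound, since the Gaussian term in $\gamma_t(x)$ then dominates.
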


Lemma~\ref{lem:upper_bound} is stated in~\cite[Corollary~10]{ArguinBovierKistler2012} whose proof relies on F--KPP estimates from~\cite{Bramson1983}.
Here, we choose to present a probabilistic proof inspired by~\cite{Mallein2015}, that treated the case $x \in [1, \sqrt{t}]$.

\begin{proof}
    If $x \geq t$, then, by Lemma~\ref{lem:many-to-one},
    \begin{equation*}
        \P(M_t > m_t + x) \leq \e^t \P(B_t > m_t + x) \leq \e^t \frac{\sqrt{t}}{\sqrt{2 \pi} x} \e^{-(m_t + x)^2/2t} \lesssim \gamma_t(x).
    \end{equation*}
    Now, assume $x < t$.
    Consider the barriers $f_t(s) = m_t - m_{t-s} + x$ and $\tilde{f}_t(s) = f_t(s) - \sqrt{2}s$.
    To simplify notations, assume that $t$ is an integer.
    Applying successively the union bound and Lemma~\ref{lem:many-to-one}, we obtain
    \begin{align}
        \P(M_t > m_t + x) &\leq \sum_{k=0}^{t-1} \E \left[ \sum_{u \in \cN_{k+1}} \mathds{1}_{\{\forall s \leq k, X_s(u) \leq f_t(s)\}} \mathds{1}_{\{\exists r \in [k, k+1], X_r(u) > f_t(r)\}} \right] \nonumber \\
        &= \sum_{k=0}^{t-1} \e^{k+1} \P(\forall s \leq k, B_s \leq f_t(s), \exists r \in [k, k+1], B_r > f_t(r)). \label{eq:upper_bound_barrier}
    \end{align}
    Let us show that, for each $k = 0, 1, \ldots, t-1$,
    \begin{equation}\label{eq:upper_bound_k}
        \P(\forall s \leq k, B_s \leq f_t(s), \exists r \in [k, k+1], B_r > f_t(r)) \lesssim \frac{\gamma_t(x) \e^{-k} t^{3/2}}{(t-k)^{3/2}(k+1)^{3/2}}.
    \end{equation}
    If $k=0$, then the left-hand side of~\eqref{eq:upper_bound_k} is bounded by $\P \left( \sup_{r \leq 1} B_r > x \right) \lesssim \e^{-x^2/2} \lesssim \gamma_t(x)$.
    Now, assume $1 \leq k \leq t-1$.
    Since the function~$s \mapsto f_t(s)$ is increasing, the left-hand side of~\eqref{eq:upper_bound_k} is bounded by
    \begin{equation*}
        \P \left( \forall s \leq k, B_s \leq f_t(s), \sup_{r \in [k, k+1]} (B_r - B_k) > f_t(k) - B_k \right).
    \end{equation*}
    We then decompose the above probability over the values of $B_k$, we apply the Markov property, and we use that the supremum of a Brownian motion between $0$ and $1$ has same law as $|B_1|$.
    This allows us to bound the left-hand side of~\eqref{eq:upper_bound_k} by
    \begin{equation}\label{eq:upper_bound_decomposition}
        \int_0^\infty \P(\forall s \leq k, B_s \leq f_t(s), B_k \in f_t(k) - \d{y}) \P(|B_1| > y).
    \end{equation}
    Now, by Girsanov's theorem, for any non-negative measurable function $F$,
    \begin{equation}\label{eq:girsanov_transformation}
        \E \left[ F((B_s)_{s \leq k}) \right] = \E \left[ \e^{-\sqrt{2}B_k-k} F((B_s+\sqrt{2}s)_{s \leq k}) \right].
    \end{equation}
    In particular, we can rewrite the first factor in~\eqref{eq:upper_bound_decomposition} as
    \begin{align}
        \E & \left[ \e^{-\sqrt{2}B_k - k} \mathds{1}_{\{ \forall s \leq k, B_s \leq \tilde{f}_t(s), B_k \in \tilde{f}_t(k) - \d{y} \}} \right] \nonumber \\
        &= \frac{t^{3/2}}{(t-k)^{3/2}} \e^{-\sqrt{2}x + \sqrt{2}y - k} \P \left( \forall s \leq k, B_s \leq \tilde{f}_t(s), B_k \in \tilde{f}_t(k) - \d{y} \right) \nonumber \\
        &\lesssim \frac{t^{3/2}}{(t-k)^{3/2}} \e^{-\sqrt{2}x + \sqrt{2}y - k} \frac{(x+1)(y+1)}{k^{3/2}} \e^{-\left( \tilde{f}_t(k) - y \right)^2/2k} \d{y}, \label{eq:upper_bound_Girsnov_and_ballot}
    \end{align}
    by Lemma~\ref{lem:brownian_estimate_2}.
    Then we expand the exponent, we rewrite $-\frac{x^2}{2k} = -\frac{x^2}{2t} - \frac{x^2(t-k)}{2kt}$, and we use that $\frac{1}{k} \log \frac{t}{t-k} \leq \frac{1}{t} \log t$.
    Doing so, we obtain
    \begin{equation*}
        \eqref{eq:upper_bound_decomposition} \lesssim \frac{t^{3/2}}{(t-k)^{3/2}k^{3/2}} \gamma_t(x) \e^{-k} \left( \e^{-\frac{x^2(t-k)}{2kt}} \int_0^\infty (y+1) \e^{\sqrt{2}y + xy/k - y^2/2k - y^2/2} \d{y} \right).
    \end{equation*}
    By completing the square in the integral and then applying some algebraic manipulations, one can bound the factor in brackets, up to a multiplicative constant, by
    \begin{equation}\label{eq:upper_bound_bounded_integral}
        \e^{-\frac{x^2(t-k)}{2kt}} \left( \frac{x}{k} + 1 \right) \e^{\frac{k}{2(k+1)} \left(  \sqrt{2} + \frac{x}{k} \right)^2} \lesssim \left( \frac{x}{k} + 1 \right) \e^{-\frac{x^2}{2k^2} \left( \frac{k^2}{k+1} - \frac{k^2}{t} \right) + \sqrt{2}\frac{x}{k}}.
    \end{equation}
    If $1 \leq k \leq \frac{t}{4}$, then $\frac{k^2}{k+1} - \frac{k^2}{t} \geq \frac{k}{2} - \frac{k}{4} \geq \frac{1}{4}$ and $\eqref{eq:upper_bound_bounded_integral} \lesssim h(x/k)$, where $h : z \mapsto (z + 1) \e^{-\frac{z^2}{8} + \sqrt{2}z}$ is a bounded function.
    If $k > \frac{t}{4}$, then $\frac{k^2}{k+1} - \frac{k^2}{t} \geq 0$ and $\frac{x}{k} < 4\frac{x}{t} < 4$, so we still have a constant bound.
    In both cases,
    \begin{equation}\label{eq:upper_bound_finite_bound}
        \e^{-\frac{x^2(t-k)}{2kt}} \int_0^\infty (y+1) \e^{\sqrt{2}y + xy/k - y^2/2k - y^2/2} \d{y} \lesssim 1.
    \end{equation}
    Hence~\eqref{eq:upper_bound_k}.
    Now, inserting~\eqref{eq:upper_bound_k} in~\eqref{eq:upper_bound_barrier}, we obtain
    \begin{equation*}
        \P(M_t > m_t + x) \lesssim \gamma_t(x) \sum_{k=0}^{t-1} \frac{t^{3/2}}{(t-k)^{3/2}(k+1)^{3/2}} \lesssim \gamma_t(x),
    \end{equation*}
    which concludes the proof.
\end{proof}

\begin{corollary}\label{cor:upper_bound_1}
    For any $x \in \R$ and any $t \geq 2$, $\P(M_t > m_t + x) \lesssim (1 + x_+) \e^{-\sqrt{2}x}$.
\end{corollary}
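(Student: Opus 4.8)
The plan is to deduce Corollary~\ref{cor:upper_bound_1} from Lemma~\ref{lem:upper_bound} by a simple case analysis on the sign and size of $x$, the only subtlety being that Lemma~\ref{lem:upper_bound} is only stated for $x \geq 1$ while the corollary is for all $x \in \R$.

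First, for $x \geq 1$ and $t \geq 2$, Lemma~\ref{lem:upper_bound} gives $\P(M_t > m_t + x) \lesssim \gamma_t(x) = x \e^{-\sqrt{2}x - x^2/2t + \frac{3}{2\sqrt{2}} x\log t / t}$. I would then argue that the exponent correction terms are harmless: writing $-\frac{x^2}{2t} + \frac{3}{2\sqrt{2}}\frac{x\log t}{t} = \frac{x}{t}\left(-\frac{x}{2} + \frac{3}{2\sqrt{2}}\log t\right)$, this quantity is bounded above by a universal constant. Indeed, for fixed $t$ the map $x \mapsto -\frac{x^2}{2t} + \frac{3}{2\sqrt{2}}\frac{x\log t}{t}$ is maximized at $x = \frac{3}{2\sqrt{2}}\log t$ with maximum value $\frac{9}{16}\frac{(\log t)^2}{t}$, and $(\log t)^2/t$ is bounded over $t \geq 2$. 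Hence $\gamma_t(x) \lesssim x \e^{-\sqrt{2}x} \leq (1 + x_+)\e^{-\sqrt{2}x}$ for $x \geq 1$, $t \geq 2$.

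Next, for $x \leq 1$ (including negative $x$), I would use monotonicity: $\P(M_t > m_t + x) \leq \P(M_t > m_t + 1) \lesssim (1+1)\e^{-\sqrt{2}} \lesssim 1$, by the previous case applied at the value $1$. On the other hand, for $x \leq 1$ we have $(1 + x_+)\e^{-\sqrt{2}x} \geq \e^{-\sqrt{2}x} \geq \e^{-\sqrt{2}}$, which is a positive constant, so $\P(M_t > m_t + x) \lesssim 1 \lesssim (1 + x_+)\e^{-\sqrt{2}x}$ on this range as well. Combining the two cases yields the claim for all $x \in \R$ and $t \geq 2$, with a uniform constant.

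I do not expect any real obstacle here; the statement is essentially a cosmetic repackaging of Lemma~\ref{lem:upper_bound} into a cleaner form that will be convenient later (removing the $x^2/t$ and $x\log t / t$ terms, extending to all real $x$, and replacing the factor $x$ by $1 + x_+$ so the bound degrades gracefully as $x \to 0^+$ and stays valid for $x \le 0$). The only point requiring a line of justification is the uniform bound on the correction exponent $-\frac{x^2}{2t} + \frac{3}{2\sqrt{2}}\frac{x\log t}{t}$, which follows from the elementary optimization above together with the boundedness of $(\log t)^2/t$ on $[2,\infty)$.
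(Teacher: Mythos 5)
Your proof is correct and is essentially the argument the paper intends (the corollary is stated without proof as an immediate consequence of Lemma~\ref{lem:upper_bound}): for $x\geq 1$ one absorbs the correction exponent $-\frac{x^2}{2t}+\frac{3}{2\sqrt{2}}\frac{x\log t}{t}\leq \frac{9}{16}\frac{(\log t)^2}{t}$, which is bounded for $t\geq 2$, and for $x\leq 1$ the trivial bound $\P(M_t>m_t+x)\leq 1\lesssim (1+x_+)\e^{-\sqrt{2}x}$ suffices. No gaps.
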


\begin{corollary}\label{cor:upper_bound_2}
    For any $t \geq 2$, any $\ell \in [2, t/\log t]$, and any $y \geq -\log \ell + 1$,
    \begin{equation*}
        \P \left( M_\ell > \frac{m_t}{t} \ell + y \right) \lesssim \frac{y + \log \ell}{\ell^{3/2}} \e^{-\sqrt{2}y - \frac{y^2}{2\ell} + \frac{3}{2\sqrt{2}} \frac{y \log t}{t}}.
    \end{equation*}
\end{corollary}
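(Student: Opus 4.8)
The plan is to reduce to Lemma~\ref{lem:upper_bound} by re-centering the target level. Since $\ell, t \ge 2$ we have $m_\ell = \sqrt2\ell - \tfrac{3}{2\sqrt2}\log\ell$ and $\tfrac{m_t}{t}\ell = \sqrt2\ell - \tfrac{3}{2\sqrt2}\tfrac{\ell\log t}{t}$, hence
\[
  \frac{m_t}{t}\ell + y = m_\ell + x, \qquad\text{where}\qquad x := y + \frac{3}{2\sqrt2}\Bigl(\log\ell - \frac{\ell\log t}{t}\Bigr),
\]
so it suffices to bound $\P(M_\ell > m_\ell + x)$. Two elementary consequences of the hypotheses will be used repeatedly: first $\ell \le t/\log t$ gives $\tfrac{\ell\log t}{t} \le 1$; second, combined with $y \ge -\log\ell + 1$, this yields $x \ge \bigl(\tfrac{3}{2\sqrt2}-1\bigr)(\log\ell - 1)$, so $x > -1$ for every $\ell \ge 2$, while $x \ge 1$ as soon as $\ell$ exceeds an absolute constant $\ell_1$.

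Suppose first $x \ge 1$. Then Lemma~\ref{lem:upper_bound} gives $\P(M_\ell > m_\ell + x) \lesssim \gamma_\ell(x)$, and it remains to show $\gamma_\ell(x)$ is at most a constant times the claimed right-hand side. Write $x = y + D$ with $D := \tfrac{3}{2\sqrt2}(\log\ell - \tfrac{\ell\log t}{t})$ and expand the exponent of $\gamma_\ell(x) = x\,\e^{-\sqrt2 x - x^2/2\ell + \frac{3}{2\sqrt2}x(\log\ell)/\ell}$ in powers of $y$. Its $y$-dependent part equals exactly $-\sqrt2 y - \tfrac{y^2}{2\ell} + \tfrac{3}{2\sqrt2}\tfrac{y\log t}{t}$; the only non-obvious point is that the linear terms $-\tfrac{yD}{\ell} + \tfrac{3}{2\sqrt2}\tfrac{y\log\ell}{\ell}$ collapse to $\tfrac{3}{2\sqrt2}\tfrac{y\log t}{t}$, which is the identity $\tfrac{3}{2\sqrt2}\tfrac{\log\ell}{\ell} - \tfrac{D}{\ell} = \tfrac{3}{2\sqrt2}\tfrac{\log t}{t}$. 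The $y$-independent part is $-\sqrt2 D - \tfrac{D^2}{2\ell} + \tfrac{3}{2\sqrt2}\tfrac{D\log\ell}{\ell}$; here $-\sqrt2 D = -\tfrac32\log\ell + \tfrac32\tfrac{\ell\log t}{t}$ produces the prefactor $\ell^{-3/2}$, and the remaining exponent $\tfrac32\tfrac{\ell\log t}{t} - \tfrac{D^2}{2\ell} + \tfrac{3}{2\sqrt2}\tfrac{D\log\ell}{\ell}$ is bounded above by an absolute constant because $\tfrac{\ell\log t}{t} \le 1$, $D^2 \ge 0$, and $|D| \lesssim \log\ell + 1$ forces $\tfrac{|D|\log\ell}{\ell} \lesssim \tfrac{(\log\ell)^2 + \log\ell}{\ell} \lesssim 1$. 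Altogether $\gamma_\ell(x) \lesssim \tfrac{x}{\ell^{3/2}}\,\e^{-\sqrt2 y - y^2/2\ell + \frac{3}{2\sqrt2}y(\log t)/t}$, and since $x \le y + \tfrac{3}{2\sqrt2}\log\ell \lesssim y + \log\ell$ (using $y + \log\ell \ge 1$), this case is complete.

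If instead $x < 1$, then by the observations above $\ell \le \ell_1$, so $\log\ell$ is bounded and, since $x \in [-1,1)$, so is $y = x - \tfrac{3}{2\sqrt2}\log\ell + \tfrac{3}{2\sqrt2}\tfrac{\ell\log t}{t}$. Each factor of the claimed right-hand side — $y + \log\ell \ge 1$, $\ell^{-3/2}$, $\e^{-\sqrt2 y}$, $\e^{-y^2/2\ell}$, and $\e^{\frac{3}{2\sqrt2}y(\log t)/t}$ (the last one because $(\log t)/t$ is bounded) — is then bounded below by a positive absolute constant, so the right-hand side is $\gtrsim 1$; as $\P(M_\ell > m_\ell + x) \le 1$, the bound holds trivially.

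I expect the delicate part to be the algebra in the case $x \ge 1$: one must verify that the three exponential factors of $\gamma_\ell(x)$ reorganize precisely into the exponent of the statement plus a remainder bounded uniformly in $(t,\ell,y)$ — this rests on the telescoping identity above together with $\tfrac{\ell\log t}{t} \le 1$ and $\sup_{\ell\ge2}\tfrac{(\log\ell)^2}{\ell} < \infty$ — and that the prefactor $x$ is controlled by $y + \log\ell$ throughout the range $\ell \in [2, t/\log t]$; the boundary values of $\ell$, where $x$ can only be bounded below by a small (possibly negative) constant, are exactly what makes the separate crude treatment of $x < 1$ necessary.
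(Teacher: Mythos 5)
Your route is exactly the paper's: its entire proof is the single line ``apply Lemma~\ref{lem:upper_bound} at time $\ell$ with $x = y + \frac{3}{2\sqrt{2}}\log \ell - \frac{3}{2\sqrt{2}}\frac{\ell \log t}{t}$'', and your recentering, the telescoping of the linear terms into $\frac{3}{2\sqrt{2}}\frac{y\log t}{t}$, the extraction of $\ell^{-3/2}$ from $-\sqrt{2}D$, the uniform bound on the leftover exponent via $\frac{\ell\log t}{t}\le 1$ and $\sup_{\ell\ge 2}(\log\ell)^2/\ell<\infty$, and the separate crude treatment when $x<1$ (needed because Lemma~\ref{lem:upper_bound} assumes $x\ge 1$, a point the paper passes over silently) are all correct and indeed more carefully argued than the published one-liner.

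The one step that does not hold as written is the final prefactor comparison $x \le y + \frac{3}{2\sqrt{2}}\log\ell \lesssim y+\log\ell$: knowing $y+\log\ell\ge 1$ does not give this, since for $y=-\log\ell+1$ one has $y+\log\ell=1$ while $y+\frac{3}{2\sqrt{2}}\log\ell = 1+\bigl(\frac{3}{2\sqrt{2}}-1\bigr)\log\ell\to\infty$, so no absolute constant works. This is not just a presentational slip of yours: along such boundary values (take, say, $t=\e^{\ell}$, so $\frac{\ell\log t}{t}\to 0$) Lemmas~\ref{lem:upper_bound} and~\ref{lem:lower_bound} show the probability is genuinely of order $x\e^{-\sqrt{2}x}$ with $x\asymp\log\ell$, which exceeds the stated right-hand side by a factor of order $\log\ell$; so the substitution cannot yield the prefactor $y+\log\ell$ uniformly down to $y=-\log\ell+1$, and the same gap is implicit in the paper's own proof (and, strictly speaking, in the statement's range under the paper's uniform-constant convention). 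It is harmless where the corollary is used, namely in~\eqref{eq:equivalent_Lambda_Lambda2_decomposition_3} and in Lemma~\ref{lem:equivalent_expectation}, since there $y\ge 0$ and your comparison is then valid because $y+\frac{3}{2\sqrt{2}}\log\ell\le\frac{3}{2\sqrt{2}}(y+\log\ell)$. To make your write-up airtight, either restrict the claim to $y\ge 0$ or state it with prefactor $y_+ +\log\ell$ (equivalently $y+\frac{3}{2\sqrt{2}}\log\ell$, up to constants), which is what your computation actually delivers; everything else in your argument stands.
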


\begin{proof}
    It suffices to apply Lemma~\ref{lem:upper_bound} at time $\ell$ and with $x = y + \frac{3}{2\sqrt{2}}\log \ell - \frac{3}{2\sqrt{2}}\frac{\ell \log t}{t}$.
\end{proof}

\section{Non-contributing particles}\label{sct:non_contributing_particles}

From now on, we consider $(x_t)_{t \geq 0}$ such that $1 \ll x_t \ll t$ as $t \to \infty$.
Here and in the rest of the paper, it will be convenient to use the following straight barriers rather than $f_t(s)$ and $\tilde{f}_t(s)$,
\begin{equation}\label{eq:barriers}
    g_t(s) = \frac{m_t}{t}s + x_t \quad \text{and} \quad \tilde{g}_t(s) = g_t(s) - \sqrt{2}s = x_t - \frac{3}{2\sqrt{2}} \frac{\log t}{t} s.
\end{equation}
Let us introduce an arbitrary $(\ell_t)_{t \geq 0}$ such that $1 \ll \ell_t < t$ as $t \to \infty$.
We will gradually add upper bounds to $\ell_t$ as we progress through the paper.
What matters is only that this function diverges to infinity slowly enough.
To simplify notations, let us abbreviate $\ell = \ell_t$.
Define
\begin{equation*}
    G_t = \bigcup_{u \in \cN_{t-\ell}} \{ \exists s \leq t-\ell, X_s(u) > g_t(s), \max_{v \geq u} X_t(v) > m_t + x_t\},
\end{equation*}
where $v \geq u$ means that $v$ is a descendant of $u$.
The following proposition together with the upcoming Lemma~\ref{lem:lower_bound} tell that particles contributing to the upper moderate deviations typically stay below the barrier $g_t(s)$ until time $s = t-\ell$.

\begin{proposition}\label{prop:upper_bound_G}
    If $1 \ll \ell < t$, then $\P(G_t) \ll \gamma_t(x_t)$.
\end{proposition}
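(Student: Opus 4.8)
The plan is to bound $\P(G_t)$ by a union bound over particles $u \in \cN_{t-\ell}$ that first cross the barrier $g_t$ at some time $s \leq t-\ell$, and whose descendants reach $m_t + x_t$ at time $t$. The key idea is to decompose according to the first time $s_0 \leq t-\ell$ at which a particle exceeds $g_t(s_0)$, then use the Markov property at that time: conditionally on the crossing, the displacement of the descendants over the remaining time $t - s_0$ is governed by an independent BBM started from a position $> g_t(s_0)$, so Corollary~\ref{cor:upper_bound_1} (applied at time $t - s_0$) controls the probability that the max reaches $m_t + x_t$. Summing the "price" of the crossing against the "price" of the remaining ascent, one should see that forcing an early crossing is strictly more costly than $\gamma_t(x_t)$ by a factor that diverges.

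Concretely, I would discretize time and write, for integer $k$ with $0 \le k \le t - \ell$,
\begin{equation*}
    \P(G_t) \le \sum_{k=0}^{\lceil t - \ell \rceil - 1} \E\!\left[ \sum_{u \in \cN_{k+1}} \mathds{1}_{\{\forall s \le k,\, X_s(u) \le g_t(s)\}} \mathds{1}_{\{\exists r \in [k,k+1],\, X_r(u) > g_t(r)\}} \mathds{1}_{\{\exists v \ge u,\, X_t(v) > m_t + x_t\}} \right].
\end{equation*}
Applying the Markov branching property at time $k+1$: given the particle $u$ and its position $X_{k+1}(u) = g_t(k+1) - w$ for some $w \in \R$ (with $w$ possibly negative right after the crossing), the conditional probability that some descendant exceeds $m_t + x_t$ at time $t$ is $\P(M_{t-k-1} > m_t + x_t - g_t(k+1) + w) = \P(M_{t-k-1} > \frac{m_t}{t}(t-k-1) + x_t + w)$. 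The point is that $\frac{m_t}{t}(t-k-1) = m_{t-k-1} + O(\log t)$ up to a controlled error, so by Corollary~\ref{cor:upper_bound_1} this is $\lesssim (1 + (x_t + w)_+)\,\e^{-\sqrt{2}(x_t + w)}\,\e^{O(\log t)}$. Combining the Girsanov change of measure for the barrier event (as in the proof of Lemma~\ref{lem:upper_bound}, now with the straight barrier $\tilde g_t$) together with the ballot-type estimate from Lemma~\ref{lem:brownian_estimate_1}, the $k$-th term becomes, schematically, $\gamma_t(x_t)$ times a summable-in-$k$ factor times an extra gain of order $\e^{-c\, x_t^2 / \ell}$ or $x_t / \sqrt{t}$ — something that vanishes because $\ell \to \infty$ is forced to grow, and crucially because the "straight" barrier $g_t$ sits strictly above the optimal curved trajectory $f_t$, so staying below it until $t - \ell$ and then climbing a distance $x_t + O(\log t)$ more in the last $\ell$ units of time is entropically suboptimal.

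The main obstacle will be making the last gain explicit and checking it is genuinely $o(1)$ uniformly: one must carefully track how the $e^{-\sqrt2 x_t}$ from the final ascent over time $t-k-1$ combines with the $e^{\sqrt2 (\text{crossing height})}$ coming from the Girsanov factor and with the Gaussian density of $B_k$ at height $\tilde g_t(k) - y$, and verify that the resulting Gaussian integral over the crossing overshoot $y$ (together with the $|B_1|$ contribution from the $[k,k+1]$ interval, handled exactly as in Lemma~\ref{lem:upper_bound}) produces a constant rather than something growing — and then that the residual prefactor, after summing $\sum_k t^{3/2}/((t-k)^{3/2}(k+1)^{3/2})$, still carries a vanishing factor. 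The identity $\frac{m_t}{t} s = \sqrt{2} s - \frac{3}{2\sqrt2}\frac{\log t}{t} s$ and the definition of $\tilde g_t$ in~\eqref{eq:barriers} mean $\tilde g_t$ is \emph{decreasing}, which is exactly the mechanism that yields a net loss relative to the curved barrier $\tilde f_t$ used in Lemma~\ref{lem:upper_bound}; quantifying this loss as $\ell \to \infty$ is the crux. A cleaner route may be to split at $k \le (t-\ell)/2$ versus $k > (t-\ell)/2$ and, in the far-from-$t$ regime, absorb everything into Lemma~\ref{lem:upper_bound} applied at time $t-k-1$ directly (no Girsanov needed), reserving the delicate estimate for $k$ close to $t - \ell$, where the remaining time is only $\Theta(\ell)$ and the constraint "reach $m_t + x_t$ starting from above $g_t$" is what forces the $\e^{-\Omega(x_t^2/\ell)}$ gain.
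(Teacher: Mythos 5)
Your set-up is the same as the paper's (first-crossing decomposition in $k$, many-to-one, Markov property at the crossing, Girsanov plus the ballot estimate of Lemma~\ref{lem:brownian_estimate_1}, and Corollary~\ref{cor:upper_bound_1} for the remaining ascent), but the quantitative mechanism you invoke for the conclusion $\P(G_t)\ll\gamma_t(x_t)$ is wrong, and it stems from a concrete algebra slip. If $X_{k+1}(u)=g_t(k+1)-w$, the descendants must cover $m_t+x_t-(g_t(k+1)-w)=\frac{m_t}{t}(t-k-1)+w$, \emph{not} $\frac{m_t}{t}(t-k-1)+x_t+w$: the $x_t$ cancels because the straight barrier already carries the $+x_t$ offset. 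So a particle sitting on $g_t$ at time $k$ near $t-\ell$ only has to continue at slope $m_t/t$, and there is no ``extra climb of $x_t+O(\log t)$ in the last $\ell$ units'' and no gain of order $\e^{-\Omega(x_t^2/\ell)}$ (which, in any case, need not vanish: the proposition only assumes $1\ll\ell<t$, so $\ell$ may be much larger than $x_t^2$); nor is there a factor $x_t/\sqrt{t}$. The true sources of smallness are different: (i) for $k\lesssim \varepsilon x_t$ the crude bound $\P(\sup_{r\le k+1}B_r>x_t)\lesssim\e^{-x_t^2/2(k+1)}$ kills the sum; (ii) for $\varepsilon x_t\le k\le \eta t$, bounding the remaining-ascent probability by $1$, each term is $\lesssim\gamma_t(x_t)\e^{-k}k^{-(1-\eta-o(1))3/2}$ and the gain is that this is a tail of a convergent series starting at $\varepsilon x_t\to\infty$ — here you also must absorb the harmful factor $\e^{\frac32\frac{\log t}{t}k}$ produced by the \emph{decreasing} $\tilde g_t$ (via $\frac{\log t}{t}k\le(\eta+o(1))\log k$, $\eta<1/3$); you correctly notice that $\tilde g_t$ decreases but misread it as the source of the gain, whereas it works against you for large $k$; (iii) for $k>\eta t$, Corollary~\ref{cor:upper_bound_1} gives the factor $\log(t-k-1)(t-k-1)^{-3/2}$, and the gain is that the sum runs over $t-k\ge\ell\to\infty$, again a vanishing tail. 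Summability in $k$ alone only yields $O(\gamma_t(x_t))$; without identifying these three vanishing tails your argument cannot reach $o(\gamma_t(x_t))$.

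Your suggested ``cleaner route'' — dropping the barrier (no Girsanov) for $k\le(t-\ell)/2$ and absorbing everything into Lemma~\ref{lem:upper_bound} at time $t-k-1$ — also fails. Without the barrier before the crossing you lose the ballot factor $x_t y/k^{3/2}$ and only retain a Gaussian bound of order $\e^{-\sqrt2 x_t-x_t^2/2k}/k$ per unit of time; summing over $k$ up to order $t$ produces something of order $\e^{-\sqrt2 x_t}\log t$, which is not $\ll x_t\e^{-\sqrt2 x_t}$ when $x_t$ grows more slowly than $\log t$ (allowed here, e.g.\ $x_t=\log\log t$). Retaining the pre-crossing barrier and the ballot estimate for \emph{all} $k\ge\varepsilon x_t$, as the paper does, is essential.
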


\begin{proof}
    To simplify notations, assume that $t-\ell$ is an integer.
    We have
    \begin{align}
        \P(G_t) &\leq \sum_{k=0}^{t-\ell-1} \E \left[ \sum_{u \in \cN_{k+1}} \mathds{1}_{\{\forall s \leq k, X_s(u) \leq g_t(s), \exists r \in [k,k+1], X_r(u) > g_t(r), \max_{v \geq u} X_t(v) > m_t + x_t\}} \right] \nonumber \\
        &\leq \sum_{k=0}^{t-\ell-1} \e^{k+1} u_t(k), \label{eq:upper_bound_G_barrier}
    \end{align}
    where
    \begin{equation*}
        u_t(k) = \P(\forall s \leq k, B_s \leq g_t(s), \exists r \in [k, k+1], B_r > g_t(r), M_{t-k-1} > m_t + x_t - B_{k+1}),
    \end{equation*}
    by the Markov property and Lemma~\ref{lem:many-to-one}.
    Consider $\varepsilon > 0$ and $\eta \in (0,1)$ to be determined later.
    We decompose the above sum into three, according if $k \in [0, \varepsilon x_t - 1)$, $k \in [\varepsilon x_t - 1, \eta t]$, or $k \in (\eta t, t - \ell - 1]$.\footnote{The argument still works if we replace the first threshold $\varepsilon x_t - 1$ with any $a_t$ such that $1 \ll a_t < \varepsilon x_t$.}
    
    First, if $k \in [0, \varepsilon x_t - 1)$, then we have the following rough bound
    \begin{equation*}
        u_t(k) \leq \P \left( \sup_{r \leq k+1} B_r > x_t \right) \lesssim \e^{-\frac{x_t^2}{2(k+1)}} \lesssim \e^{-\frac{x_t}{2 \varepsilon}}.
    \end{equation*}
    Hence,
    \begin{equation}\label{eq:upper_bound_G_1}
        \sum_{k=0}^{\varepsilon x_t - 1} \e^{k+1} u_t(k) \lesssim \varepsilon x_t \e^{\varepsilon x_t - \frac{x_t}{2\varepsilon}} \ll \gamma_t(x_t),
    \end{equation}
    as soon as $\varepsilon$ is small enough.

    Now, assume $k \in [\varepsilon x_t - 1, \eta t]$.
    Since the function~$s \mapsto g_t(s)$ is increasing, $u_t(k)$ is bounded by
    \begin{multline}\label{eq:upper_bound_G_2_decomposition}
        \P \left( \forall s \leq k, B_s \leq g_t(s), \sup_{r \in [k, k+1]} B_r > g_t(k), M_{t-k-1} > m_t + x_t - \sup_{r \in [k, k+1]} B_r \right) \\
        = \int_0^\infty \int_y^\infty \P(\forall s \leq k, B_s \leq g_t(s), B_k \in g_t(k) - \d{y}) \P(|B_1| \in \d{z}) \\
        \times \P \left( M_{t-k-1} > \frac{m_t}{t}(t-k) - z + y \right).
    \end{multline}
    We control the first factor in the above integral similarly to~\eqref{eq:upper_bound_Girsnov_and_ballot}, using the Girsanov transformation~\eqref{eq:girsanov_transformation} and the upper bound in Lemma~\ref{lem:brownian_estimate_1},
    \begin{equation}\label{eq:upper_bound_G_2_1}
        \P(\forall s \leq k, B_s \leq g_t(s), B_k \in g_t(k) - \d{y}) \lesssim \frac{x_t y}{k^{3/2}} \e^{-\sqrt{2}(\tilde{g}_t(k) - y) - k - (\tilde{g}_t(k) - y)^2/2k} \d{y}.
    \end{equation}
    For the second one, we use the explicit distribution of $|B_1|$ on $[0, \infty)$,
    \begin{equation}\label{eq:upper_bound_G_2_2}
         \P(|B_1| \in \d{z}) = \sqrt{\frac{2}{\pi}} \e^{-z^2/2} \d{z}.
    \end{equation}
    For the third one, we simply bound
    \begin{equation}\label{eq:upper_bound_G_2_3}
        \P \left( M_{t-k-1} > \frac{m_t}{t}(t-k) - z + y \right) \leq 1.
    \end{equation}
    We insert the three bounds~\eqref{eq:upper_bound_G_2_1}, \eqref{eq:upper_bound_G_2_2}, \eqref{eq:upper_bound_G_2_3} in~\eqref{eq:upper_bound_G_2_decomposition}, we expand the exponents, and we rewrite $-\frac{x_t^2}{2k} = -\frac{x_t^2}{2t} - \frac{x_t^2(t-k)}{2kt}$.
    Doing so, we obtain
    \begin{equation*}
        u_t(k) \lesssim \gamma_t(x_t) \frac{\e^{-k + \frac{3}{2}\frac{\log t}{t}k}}{k^{3/2}} \left( \e^{-\frac{x_t^2(t-k)}{2kt}} \int_0^\infty y \e^{\frac{x_t}{k}y - y^2/2k} \int_y^\infty \e^{\sqrt{2}z - z^2/2} \d{z} \d{y} \right).
    \end{equation*}
    By completing the square in the integral over $z$ and then applying~\eqref{eq:upper_bound_finite_bound}, we see that the factor in brackets is bounded by a constant.
    Moreover, since $\varepsilon x_t < k \leq \eta t$, we have $\frac{\log k}{k} \geq \frac{\log(\eta t)}{\eta t}$ and then $\frac{\log t}{t} k \leq (\eta + o(1)) \log k$, where $o(1)$ is a function that converges to $0$ as $t \to \infty$, uniformly in all the other variables.
    It follows that
    \begin{equation*}
        u_t(k) \lesssim \gamma_t(x_t) \frac{\e^{-k}}{k^{(1 - \eta - o(1))3/2}}.
    \end{equation*}
    Then,
    \begin{equation}\label{eq:upper_bound_G_2}
        \sum_{k = \varepsilon x_t - 1}^{\eta t} \e^{k+1} u_t(k) \lesssim \gamma_t(x_t) \sum_{k = \varepsilon x_t - 1}^\infty \frac{1}{k^{(1 - \eta - o(1))3/2}} \ll \gamma_t(x_t),
    \end{equation}
    as soon as $\eta + o(1) < 1/3$.

    Finally, the case $k \in (\eta t, t-\ell-1]$ can be treated in the same way as $k \in [\varepsilon x_t - 1, \eta t]$ but with a sharper control than~\eqref{eq:upper_bound_G_2_3}.
    Thanks to Corollary~\ref{cor:upper_bound_1}, we bound the left-hand side of~\eqref{eq:upper_bound_G_2_3} by
    \begin{equation*}
        \P \left( M_{t-k-1} > \frac{m_t}{t}(t-k-1) - z + y \right) \lesssim \log(t-k-1) \frac{\e^{\frac{3}{2}\frac{\log t}{t}(t-k-1)}}{(t-k-1)^{3/2}} \e^{\sqrt{2}(z-y)}.
    \end{equation*}
    Inserting this new bound, together with~\eqref{eq:upper_bound_G_2_1} and~\eqref{eq:upper_bound_G_2_2}, in~\eqref{eq:upper_bound_G_2_decomposition}, we obtain
    \begin{equation*}
        u_t(k) \lesssim \gamma_t(x_t) \e^{-k} \frac{\log(t-k-1) t^{3/2}}{k^{3/2}(t-k-1)^{3/2}} \left( \e^{-\frac{x_t^2(t-k)}{2kt}} \int_0^\infty y \e^{\frac{x_t}{k}y - y^2/2k} \int_y^\infty \e^{\sqrt{2}z - z^2/2} \d{z} \d{y} \right).
    \end{equation*}
    As in the case $k \in [\varepsilon x_t - 1, \eta t]$, the factor in brackets is bounded by a constant.
    It follows that
    \begin{equation}\label{eq:upper_bound_G_3}
        \sum_{k = \eta t}^{t-\ell} \e^{k+1} u_t(k) \lesssim \gamma_t(x_t) \sum_{k = \eta t}^{t-\ell} \frac{\log(t-k-1) t^{3/2}}{k^{3/2}(t-k-1)^{3/2}} \ll \gamma_t(x_t).
    \end{equation}
    
    By~\eqref{eq:upper_bound_G_barrier}, \eqref{eq:upper_bound_G_1}, \eqref{eq:upper_bound_G_2}, and~\eqref{eq:upper_bound_G_3}, we have $\P(G_t) \ll \gamma_t(x_t)$, which concludes the proof.
\end{proof}

\section{A first lower bound}

We still consider $(x_t)_{t \geq 0}$ such that $1 \ll x_t \ll t$ as $t \to \infty$.

\begin{lemma}\label{lem:lower_bound}
    There exists $t_0 > 0$ such that, for any $t \geq t_0$, $\P(M_t > m_t + x_t) \gtrsim \gamma_t(x_t)$.
\end{lemma}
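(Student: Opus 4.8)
The plan is to establish the lower bound via a truncated first/second moment method. Fix a slowly growing $\ell = \ell_t$ and work at the intermediate time $t-\ell$, using the barrier $g_t(s) = \frac{m_t}{t}s + x_t$. Consider the set of ``good'' particles at time $t-\ell$, namely those $u \in \cN_{t-\ell}$ that stay below $g_t(s)$ for all $s \leq t-\ell$ and whose terminal displacement $X_{t-\ell}(u)$ lies in a window of the form $[\tilde g_t(t-\ell) - y - 1, \tilde g_t(t-\ell) - y]$ after recentering by $\sqrt{2}(t-\ell)$; integrating over $y \in [0, \infty)$ with the appropriate weight, these are the particles from which a descendant can plausibly reach $m_t + x_t$ at time $t$ by performing an ordinary fluctuation of size $\sqrt{2}\ell + O(1)$ over the last $\ell$ units of time. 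Let $N_t$ count such particles that additionally have a descendant above $m_t + x_t$ at time $t$, so that $\{M_t > m_t + x_t\} \supseteq \{N_t \geq 1\}$ on the complement of $G_t$, and apply the Paley--Zygmund inequality $\P(N_t \geq 1) \geq \E[N_t]^2 / \E[N_t^2]$.

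The first moment is computed by the many-to-one formula together with the lower Brownian barrier estimate of Lemma~\ref{lem:brownian_estimate_1}(ii): the probability that a single Brownian path stays below $g_t(s)$ and lands in the prescribed window is, after a Girsanov change of measure killing the drift, of order $\frac{x_t(y+1)}{(t-\ell)^{3/2}} \e^{-\sqrt{2}x_t + \sqrt{2}y} \e^{-(\cdots)^2/2(t-\ell)} \e^{-(t-\ell)}$ times the factor $t^{3/2}/\ell^{3/2}$ coming from the logarithmic correction, and multiplying by $\e^{t-\ell}$ cancels the exponential. The remaining last-$\ell$ probability $\P(M_\ell > \sqrt{2}\ell + y + O(1))$ is exactly what appears in the definition of $C^*$. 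Summing (integrating) over $y$ and expanding the square $-(\tilde g_t(t-\ell) - y)^2/2(t-\ell)$ as $-\frac{x_t^2}{2t} + \frac{3}{2\sqrt2}\frac{x_t \log t}{t} + \sqrt2 x_t \cdot(\text{lower order}) - \frac{y^2}{2(t-\ell)} + \cdots$, and using $\ell \ll t$, $x_t \ll t$ to control the cross terms, one gets $\E[N_t] \gtrsim \gamma_t(x_t) \int_0^\infty y \e^{\sqrt2 y}\P(M_\ell > \sqrt2\ell + y)\,\d y \gtrsim \gamma_t(x_t)$ for $t$ large, since the integral converges to $C^* \sqrt{\pi/2} > 0$.

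The main obstacle is the second moment. The standard decomposition splits $\E[N_t^2]$ according to the branching time $r$ of the most recent common ancestor of a pair of counted particles: the diagonal term is controlled by $\E[N_t]$, and the off-diagonal term, via a many-to-two formula, becomes an integral over $r \in [0, t-\ell]$ of the probability that the common ancestral path stays below $g_t$ up to time $r$, lands at some level, and then two independent subtrees each contribute. The key point is that the barrier constraint $X_s(u) \leq g_t(s)$ for $s \leq t-\ell$ forces the common ancestor to be far below $\sqrt2 r$ at time $r$ (the recentered barrier $\tilde g_t(r) \leq x_t$ is much smaller than the typical free fluctuation for $r$ of order $t$), so the ``cost'' of the two subtrees reaching height $m_t + x_t$ from such a low starting point is penalized by roughly the square of the single-particle cost, and the branching-time integral is summable. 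The delicate part is that for small $r$ one must instead use the window constraint together with Lemma~\ref{lem:brownian_estimate_2} to accommodate the slightly curved effective barrier seen by each subtree, and show the bound $\E[N_t^2] \lesssim \E[N_t]^2$ with a constant uniform in $t$; here I would adapt the pairing argument of Bramson--Ding--Zeitouni~\cite{BramsonDingZeitouni2016}, replacing their branching-random-walk ballot estimates by Lemmas~\ref{lem:brownian_estimate_1} and~\ref{lem:brownian_estimate_2} and using Proposition~\ref{prop:upper_bound_G} to discard the contribution of pairs whose ancestor violates the barrier. Combining the two moment estimates with Paley--Zygmund and subtracting $\P(G_t) \ll \gamma_t(x_t)$ yields $\P(M_t > m_t + x_t) \gtrsim \gamma_t(x_t)$.
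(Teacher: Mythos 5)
There is a genuine gap, and it is structural rather than cosmetic. Your first-moment lower bound for $N_t$ ends with ``$\E[N_t] \gtrsim \gamma_t(x_t) \int_0^\infty y \e^{\sqrt{2}y}\P(M_\ell > \sqrt{2}\ell + y)\d{y} \gtrsim \gamma_t(x_t)$, since the integral converges to $C^*\sqrt{\pi/2} > 0$.'' At this point of the paper nothing guarantees that this limit is positive: in the paper the positivity of $C^*$ is deduced (in the proof of Proposition~\ref{prop:equivalent_expectation}) precisely from Lemma~\ref{lem:lower_bound}, the statement you are trying to prove, so your argument is circular. Nor is there a cheap independent substitute: since you take $\ell = \ell_t \to \infty$, you need a lower bound on $\int_0^\infty y\e^{\sqrt{2}y}\P(M_\ell > \sqrt{2}\ell + y)\d{y}$ that is uniform in $\ell$, and the mass of that integral sits at $y \asymp \sqrt{\ell}$, so such a bound is itself a moderate-deviation lower bound for $M_\ell$ --- essentially the lemma again (a single-particle comparison $M_\ell \geq B_\ell$ loses a factor $\e^{-\ell}$ and gives nothing). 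The cheapest repairs are either to take $\ell$ fixed (so the integral is a positive constant depending only on $\ell$, which is enough for a $\gtrsim$ statement), or, as the paper does, to dispense with the intermediate time altogether: apply the second moment method directly at time $t$ to $\Delta_t = \sum_{u\in\cN_t}\mathds{1}_{\{\forall s\le t,\ X_s(u)\le g_t(s),\ X_t(u)>m_t+x_t-1\}}$, whose first moment is bounded below by $\gamma_t(x_t)$ using only the lower ballot estimate of Lemma~\ref{lem:brownian_estimate_1}, with no $\P(M_\ell>\cdot)$ factor to control from below.

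Second, your stated target for the second moment, $\E[N_t^2] \lesssim \E[N_t]^2$, cannot be correct: $N_t$ is integer-valued and $\E[N_t] \asymp \gamma_t(x_t) \to 0$, so $\E[N_t^2] \geq \E[N_t] \gg \E[N_t]^2$; moreover, if that bound held, Paley--Zygmund would give $\P(M_t > m_t + x_t) \gtrsim 1$, contradicting Lemma~\ref{lem:upper_bound}. What you must prove is $\E[N_t^2] \lesssim \E[N_t]$ (the paper proves exactly this for $\Delta_t$, via the many-to-two formula, the Girsanov transform, and the upper ballot bound, the point being that the integral over the branching time of the common ancestor contributes only $O(\gamma_t(x_t)) = O(\E[\Delta_t])$); Paley--Zygmund then yields $\P(M_t > m_t + x_t - 1) \geq \E[\Delta_t]^2/\E[\Delta_t^2] \gtrsim \E[\Delta_t] \gtrsim \gamma_t(x_t)$. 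Finally, the detour through $G_t$ is unnecessary in your setup: since $N_t$ only counts particles that actually have a descendant above $m_t + x_t$ at time $t$, the inclusion $\{N_t \geq 1\} \subseteq \{M_t > m_t + x_t\}$ holds unconditionally, so there is nothing to subtract (using Proposition~\ref{prop:upper_bound_G} here is not circular, just superfluous).
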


\begin{proof}
    Here again, we will make use of the barriers $g_t(s)$ and $\tilde{g}_t(s)$ introduced in~\eqref{eq:barriers}.
    Define, for $u \in \cN_t$,
    \begin{equation*}
        H_t^{(u)} = \{\forall s \leq t, X_s(u) \leq g_t(s), X_t(u) > m_t + x_t - 1\} \quad \text{and} \quad \Delta_t = \sum_{u \in \cN_t} \mathds{1}_{H_t^{(u)}}.
    \end{equation*}
    By Cauchy-Schwarz,
    \begin{equation}\label{eq:lower_bound_cauchy-schwarz}
        \P(M_t > m_t + x_t - 1) \geq \frac{\E \left[ \mathds{1}_{\{M_t > m_t + x_t - 1\}} \Delta_t \right]^2}{\E \left[ \Delta_t^2 \right]} = \frac{\E \left[\Delta_t \right]^2}{\E \left[ \Delta_t^2 \right]}.
    \end{equation}
    Using successively Lemma~\ref{lem:many-to-one}, the Girsanov transformation~\eqref{eq:girsanov_transformation}, and the lower bound in Lemma~\ref{lem:brownian_estimate_1}, we obtain
    \begin{align}
        \E \left[ \Delta_t \right] &= \e^t \P(\forall s \leq t, B_s \leq g_t(s), B_t > g_t(t) - 1) \nonumber \\
        &\geq t^{3/2} \e^{-\sqrt{2}x_t} \P(\forall s \leq t, B_s \leq \tilde{g}_t(s), B_t > \tilde{g}_t(t) - 1) \nonumber \\
        &\gtrsim \gamma_t(x_t). \label{eq:lower_bound_expectation_Delta}
    \end{align}
    The study of $\E \left[ \Delta_t^2 \right]$ is more involved.
    We start by applying Lemma~\ref{lem:many-to-two},
    \begin{equation}\label{eq:lower_bound_many_to_two}
        \E \left[ \Delta_t^2 \right] = \E \left[ \Delta_t \right] + 2 \int_0^t \e^{2t-s} \P \left( \substack{\forall r \leq t, B_r^{1, s} \leq g_t(r), B_t^{1, s} > g_t(t) - 1 \\ \forall r \leq t, B_r^{2, s} \leq g_t(r), B_t^{2, s} > g_t(t) - 1} \right) \d{s},
    \end{equation}
    where $B^{1, s}$ and $B^{2, s}$ are two Brownian motions that coincide on $[0, s]$ and are independent afterward.
    By the Markov property,
    \begin{equation*}
        \P \left( \substack{\forall r \leq t, B_r^{1, s} \leq g_t(r), B_t^{1, s} > g_t(t) - 1 \\ \forall r \leq t, B_r^{2, s} \leq g_t(r), B_t^{2, s} > g_t(t) - 1} \right) = \E \left[ \mathds{1}_{\{\forall r \leq s, B_r \leq g_t(r)\}} \phi_{t,s}(B_s)^2 \right],
    \end{equation*}
    with
    \begin{align}
        \phi_{t,s}(z) &= \P(\forall r \leq t-s, B_r \leq g_t(r+s) - z, B_{t-s} > g_t(t) - z - 1) \nonumber \\
        &\lesssim t^{3/2} \e^{-\sqrt{2}x_t + \sqrt{2}z - t - s} \frac{g_t(s) - z}{(t-s)^{3/2}} \mathds{1}_{\{z \leq g_t(s)\}}, \label{eq:lower_bound_ballot_t-s}
    \end{align}
    where we have once again used the Girsanov transformation~\eqref{eq:girsanov_transformation} and the upper bound in Lemma~\ref{lem:brownian_estimate_1}.
    Then
    \begin{align*}
        \e^{2t - s} \P \left( \substack{\forall r \leq t, B_r^{1, s} \leq g_t(r), B_t^{1, s} > g_t(t) - 1 \\ \forall r \leq t, B_r^{2, s} \leq g_t(r), B_t^{2, s} > g_t(t) - 1} \right) &\lesssim \frac{t^3 \e^{-2\sqrt{2}x_t - 3s}}{(t-s)^3} \E \left[ \mathds{1}_{\{\forall r \leq s, B_r \leq g_t(r)\}} \e^{2\sqrt{2}B_s} (g_t(s) - B_s)^2 \right] \\
        &= \frac{t^3 \e^{-2\sqrt{2}x_t}}{(t-s)^3} \E \left[ \mathds{1}_{\{\forall r \leq s, B_r \leq \tilde{g}_t(r)\}} \e^{\sqrt{2}B_s} (\tilde{g}_t(s) - B_s)^2 \right].
    \end{align*}
    Decomposing over the values of $B_s$, we rewrite the last expectation as
    \begin{align}
        \int_0^\infty & \E \left[ \mathds{1}_{\{\forall r \leq s, B_r \leq \tilde{g}_t(r), B_s \in \tilde{g}_t(s) - \d{y}\}} \e^{\sqrt{2}B_s} (\tilde{g}_t(s) - B_s)^2 \right] \nonumber \\
        &= \int_0^\infty \e^{\sqrt{2}(\tilde{g}_t(s) - y)} y^2 \P(\forall r \leq s, B_r \leq \tilde{g}_t(r), B_s \in \tilde{g}_t(s) - \d{y}) \nonumber \\
        &\lesssim \frac{x_t}{s^{3/2}} \e^{\sqrt{2}x_t + \frac{3}{2\sqrt{2}}\frac{x_t \log t}{t} - \frac{3}{2}\frac{\log t}{t}s} \int_0^\infty y^3 \e^{-\sqrt{2}y - (x_t - y)^2/2s} \d{y}, \label{eq:lower_bound_ballot_s}
    \end{align}
    by Lemma~\ref{lem:brownian_estimate_1}.
    Note that the last integral is bounded by
    \begin{equation*}
        \int_0^\infty y^3 \e^{-\sqrt{2}y - (x_t - y)^2/2t} \d{y} \leq \e^{-x_t^2/2t} \int_0^\infty y^3 \e^{-(\sqrt{2} + o(1))y} \d{y} \lesssim \e^{-x_t^2/2t}.
    \end{equation*}
    It follows that
    \begin{equation*}
        \int_1^{t-1} \e^{2t-s} \P \left( \substack{\forall r \leq t, B_r^{1, s} \leq g_t(r), B_t^{1, s} > g_t(t) - 1 \\ \forall r \leq t, B_r^{2, s} \leq g_t(r), B_t^{2, s} > g_t(t) - 1} \right) \d{s} \lesssim \gamma_t(x_t) \int_1^{t-1} \frac{t^{3/2} \e^{\frac{3}{2} \frac{\log t}{t}(t-s)}}{(t-s)^3s^{3/2}} \d{s}.
    \end{equation*}
    Now, use that $\frac{\log t}{t}(t-s) \leq \log(t-s)$ for $t$ large enough and $s \leq t-1$ to bound
    \begin{equation*}
        \int_1^{t-1} \frac{t^{3/2} \e^{\frac{3}{2} \frac{\log t}{t}(t-s)}}{(t-s)^3s^{3/2}} \d{s} \leq \int_1^{t-1} \frac{t^{3/2}}{(t-s)^{3/2}s^{3/2}} \d{s} \leq 2 \int_1^\infty \frac{\d{s}}{s^{3/2}} = 4.
    \end{equation*}
    We treat the integration over $s \in [0, 1)$ similarly, except that we use the bound
    \begin{equation*}
        \P(\forall r \leq s, B_r \leq \tilde{g}_t(r), B_s \in \tilde{g}_t(s) - \d{y}) \leq \P(B_s \in \tilde{g}_t(s) - \d{y}) = \frac{\e^{-(\tilde{g}_t(s) - y)^2/2s}}{\sqrt{2 \pi s}} \d{y},
    \end{equation*}
    instead of Lemma~\ref{lem:brownian_estimate_1} in~\eqref{eq:lower_bound_ballot_s}.
    We treat the integration over $s \in (t-1, t]$ similarly as $s \in [1, t-1]$, except that we only use the Girsanov transformation~\eqref{eq:girsanov_transformation} to bound $\phi_{t,s}(z) \lesssim t^{3/2} \e^{-\sqrt{2}x_t + \sqrt{2}z - t - s}$ instead of~\eqref{eq:lower_bound_ballot_t-s}.
    Finally, we obtain
    \begin{equation*}
        \int_0^t \e^{2t-s} \P \left( \substack{\forall r \leq t, B_r^{1, s} \leq g_t(r), B_t^{1, s} > g_t(t) - 1 \\ \forall r \leq t, B_r^{2, s} \leq g_t(r), B_t^{2, s} > g_t(t) - 1} \right) \d{s} \lesssim \gamma_t(x_t).
    \end{equation*}
    Therefore, by~\eqref{eq:lower_bound_expectation_Delta} and~\eqref{eq:lower_bound_many_to_two}, we have $\E \left[ \Delta_t^2 \right] \lesssim \E \left[ \Delta_t \right]$.
    The lower bound~\eqref{eq:lower_bound_cauchy-schwarz} then becomes $\P(M_t > m_t + x_t - 1) \gtrsim \E \left[ \Delta_t \right]$.
    By~\eqref{eq:lower_bound_expectation_Delta}, this concludes the proof.
\end{proof}

\section{Reduction to an expectation}

In this section, we consider an arbitrary $\ell = \ell_t$ such that $1 \ll \ell < \min(t/\log t, x_t^5)$.
We stress the fact that, contrary to Section~\ref{sct:non_contributing_particles}, the bound $\ell < t$ will not be sufficient here.
We need $\ell < t/\log t$ to apply Corollary~\ref{cor:upper_bound_2} in the proof of Lemma~\ref{lem:equivalent_Lambda_Lambda2}.
The assumption $\ell < x_t^5$ is made to have an easy control of~\eqref{eq:equivalent_Lambda_Lambda2_2} below, but it is dispensable.
Define, for $u \in \cN_{t-\ell}$,
\begin{equation}\label{eq:definition_lambda}
    E_t^{(u)} = \{\forall s \leq t-\ell, X_s(u) \leq g_t(s), \max_{v \geq u} X_t(v) > m_t + x_t\} \quad \text{and} \quad \Lambda_t = \sum_{u \in \cN_{t-\ell}} \mathds{1}_{E_t^{(u)}}.
\end{equation}
Note that these objects depend on the choice of $(\ell_t)_{t \geq 0}$.
The main result of this section is the following equivalent.

\begin{proposition}\label{prop:expectation_equivalent}
    For any choice of $\ell$ such that $1 \ll \ell < \min(t/\log t, x_t^5)$, we have $\P(M_t > m_t + x_t) \sim \E \left[ \Lambda_t \right]$ as $t \to \infty$.
\end{proposition}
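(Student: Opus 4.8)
The plan is to show that $\Lambda_t$ captures $\P(M_t > m_t + x_t)$ up to a negligible error, by a sandwich argument. On the one hand, the event $\{M_t > m_t + x_t\}$ is the disjoint union, over the particles $u \in \cN_{t-\ell}$ with an ancestor-descendant reaching height $m_t + x_t$ at time $t$, of the events in $E_t^{(u)}$ together with their ``bad'' counterpart where the barrier $g_t(s)$ is crossed before time $t-\ell$. Formally, $\{M_t > m_t + x_t\} \subseteq \{\Lambda_t \geq 1\} \cup G_t$, so that $\P(M_t > m_t + x_t) \leq \P(\Lambda_t \geq 1) + \P(G_t) \leq \E[\Lambda_t] + \P(G_t)$, and by Proposition~\ref{prop:upper_bound_G} the term $\P(G_t)$ is $o(\gamma_t(x_t))$, hence $o(\E[\Lambda_t])$ once we know $\E[\Lambda_t] \gtrsim \gamma_t(x_t)$ (which follows from Lemma~\ref{lem:lower_bound} together with the upper-bound side below, or can be extracted directly by a first-moment computation as in~\eqref{eq:lower_bound_expectation_Delta}). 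So the upper bound $\P(M_t > m_t + x_t) \leq (1 + o(1)) \E[\Lambda_t]$ is essentially immediate.

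The substance is the reverse inequality: $\E[\Lambda_t] \leq (1 + o(1)) \P(M_t > m_t + x_t)$. Here one cannot simply bound $\P(\Lambda_t \geq 1)$ below by $\E[\Lambda_t]$; instead the idea is that under the barrier constraint up to time $t-\ell$, the descendants of distinct particles $u \in \cN_{t-\ell}$ reaching $m_t + x_t$ are, with high probability, realized by essentially one particle — i.e. $\Lambda_t$ is typically $0$ or $1$. I would make this precise with the Cauchy–Schwarz / Paley–Zygmund bound
\begin{equation*}
    \P(M_t > m_t + x_t) \geq \P(\Lambda_t \geq 1) \geq \frac{\E[\Lambda_t]^2}{\E[\Lambda_t^2]},
\end{equation*}
and then show $\E[\Lambda_t^2] = (1 + o(1))\E[\Lambda_t]$. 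Expanding the square via the many-to-two formula over the split time $s$, one separates the diagonal contribution $\E[\Lambda_t]$ (pairs of descendants of the same $\cN_{t-\ell}$-particle) from the off-diagonal double integral over $s \in [0, t-\ell]$. For $s$ small the pair of trajectories must both climb from $0$ to near $x_t$ over a long time under the barrier, which is very unlikely — these contributions are handled by the same Girsanov-plus-ballot-lemma estimates (Lemma~\ref{lem:brownian_estimate_1}) used in the proof of Lemma~\ref{lem:lower_bound}, yielding a summable/integrable factor $t^{3/2}/((t-s)^{3/2} s^{3/2})$ times $\gamma_t(x_t)^2$. For $s$ close to $t-\ell$, the two branches only have a short time $t - \ell - s$ to diverge after the split, so the ``second'' descendant's cost is not gained and this part would not be $o(\E[\Lambda_t]^2)$ if the split occurred after $t-\ell$ — but it cannot, since the branching structure is frozen at $\cN_{t-\ell}$ for the purpose of defining $\Lambda_t$; the split time $s$ in the many-to-two expansion of $\E[\Lambda_t^2]$ only ranges over $[0, t-\ell]$ plus, for $s \in (t-\ell, t]$, the two trajectories are forced to be the SAME particle of $\cN_{t-\ell}$ and hence contribute to the diagonal. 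This is exactly where the choice $\ell \ll x_t^5$ (and $\ell \ll t/\log t$) enters: it guarantees that, relative to $\gamma_t(x_t)^2$, the surviving off-diagonal mass near $s = t - \ell$ is controlled via Corollary~\ref{cor:upper_bound_2}, giving a factor like $\e^{-c\,\ell}$ or a power of $\ell$ in the denominator that tends to $0$.

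Putting the pieces together: from the many-to-two expansion, $\E[\Lambda_t^2] = \E[\Lambda_t] + (\text{off-diagonal})$, and the off-diagonal term is $o(\gamma_t(x_t)^2) = o(\E[\Lambda_t]^2)$ by the estimates above (using $\E[\Lambda_t] \gtrsim \gamma_t(x_t)$ and $\E[\Lambda_t] \lesssim \gamma_t(x_t)$, the latter from Lemma~\ref{lem:upper_bound} or its corollaries applied to the inner maximum, noting $\E[\Lambda_t] \leq \E[\#\{u : \max_{v \geq u} X_t(v) > m_t+x_t\}]$-type bounds combined with the barrier). Since $\E[\Lambda_t] \to \infty$ is not needed — what we need is $\E[\Lambda_t^2]/\E[\Lambda_t]^2 \to 1$ — we get $\P(\Lambda_t \geq 1) \geq (1 - o(1))\E[\Lambda_t]$, hence $\P(M_t > m_t + x_t) \geq (1 - o(1))\E[\Lambda_t]$. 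Combined with the easy upper bound, this yields $\P(M_t > m_t + x_t) \sim \E[\Lambda_t]$. The main obstacle is the off-diagonal second-moment estimate near the split time $s = t - \ell$: one must argue carefully that the branching is effectively frozen before $t - \ell$ so that no ``cheap'' pairs with a late common ancestor arise, and then quantify the remaining cost using the sharp tail bound of Corollary~\ref{cor:upper_bound_2}; the bookkeeping of the several regimes of $s$ (near $0$, bulk, near $t-\ell$) with the barrier-corrected Gaussian weights is where the technical work lies, but it closely parallels the computation already carried out for $\E[\Delta_t^2]$ in Lemma~\ref{lem:lower_bound}.
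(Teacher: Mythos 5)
Your skeleton is the same as the paper's: the upper bound $\P(M_t > m_t + x_t) \le \E[\Lambda_t] + \P(G_t)$ combined with Proposition~\ref{prop:upper_bound_G} and $\E[\Lambda_t] \gtrsim \gamma_t(x_t)$, and the lower bound via Cauchy--Schwarz/Paley--Zygmund, $\P(M_t > m_t + x_t) \ge \E[\Lambda_t]^2/\E[\Lambda_t^2]$, together with a second-moment estimate (the paper's Lemma~\ref{lem:equivalent_Lambda_Lambda2}). However, there is a genuine confusion at the quantitative heart of your argument: the normalization of the off-diagonal term. What is needed is $\E[\Lambda_t^2] = (1+o(1))\E[\Lambda_t]$, i.e.\ the contribution of distinct pairs must be $o(\E[\Lambda_t]) = o(\gamma_t(x_t))$. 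You state this correctly once, but then you claim that the bulk off-diagonal contribution is an integrable factor times $\gamma_t(x_t)^2$, that the off-diagonal is $o(\gamma_t(x_t)^2) = o(\E[\Lambda_t]^2)$, and that ``what we need is $\E[\Lambda_t^2]/\E[\Lambda_t]^2 \to 1$''. These statements are wrong in this regime: since $x_t \to \infty$, the first moment satisfies $\E[\Lambda_t] \asymp \gamma_t(x_t) \to 0$, and since $\Lambda_t$ is integer-valued, $\E[\Lambda_t^2] \ge \E[\Lambda_t] \gg \E[\Lambda_t]^2$, so the ratio $\E[\Lambda_t^2]/\E[\Lambda_t]^2$ tends to infinity, and a bound of order $\gamma_t(x_t)^2$ for the off-diagonal is neither what the Girsanov/ballot computation produces (it yields one power of $\gamma_t(x_t)$ in front of the $s$-integral, not two) nor attainable, since the true off-diagonal is of order $\gamma_t(x_t)$ times a slowly decaying factor, vastly larger than $\gamma_t(x_t)^2$. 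Moreover, mere integrability of $t^{3/2}/(s^{3/2}(t-s)^{3/2})$ only gives off-diagonal $= O(\gamma_t(x_t))$, which, as in Lemma~\ref{lem:lower_bound}, yields $\P(M_t > m_t + x_t) \gtrsim \E[\Lambda_t]$ but not the asymptotic equivalence.

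The step you actually have to carry out is that the off-diagonal integral is $o(\gamma_t(x_t))$, and this is precisely where the truncation at time $t-\ell$ pays off: the split time is confined to $[0, t-\ell]$, so the bulk integral of $(t-\ell)^{3/2}/\bigl(s^{3/2}(t-\ell-s)^{3/2}\bigr)$ over $[\ell^{1/3}, t-\ell-\ell^{1/3}]$ is $O(\ell^{-1/6}) = o(1)$, while the boundary regions are handled by dropping the ballot factor near $s = 0$ and by the sharp tail bound of Corollary~\ref{cor:upper_bound_2} for $M_\ell$ near $s = t-\ell$, exactly as in the paper's Lemma~\ref{lem:equivalent_Lambda_Lambda2}. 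Your structural intuition --- freezing the genealogy at $t-\ell$ so that late splits are impossible, and using Corollary~\ref{cor:upper_bound_2} near $s = t-\ell$ --- is correct, but it is measured against the wrong reference scale $\gamma_t(x_t)^2$; once the target is corrected to $o(\E[\Lambda_t])$ and the bookkeeping of the three regimes of $s$ is redone accordingly, your proof coincides with the paper's.
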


We will need two lemmas.

\begin{lemma}\label{lem:lower_bound_Lambda}
    There exists $t_0 > 0$ such that, for any $t \geq t_0$, $\E \left[ \Lambda_t \right] \gtrsim \gamma_t(x_t)$.
\end{lemma}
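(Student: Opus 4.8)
The plan is to deduce the lower bound on $\E[\Lambda_t]$ directly from the first-moment computation already carried out in the proof of Lemma~\ref{lem:lower_bound}, combined with Lemma~\ref{lem:lower_bound} itself. The key observation is that $\Lambda_t$ counts particles $u \in \cN_{t-\ell}$ that stay below $g_t(s)$ up to time $t-\ell$ \emph{and} have a descendant exceeding $m_t + x_t$ at time $t$. Conditioning on the BBM up to time $t-\ell$ and using the branching property, each such particle $u$ located at $X_{t-\ell}(u) = z$ spawns an independent BBM of length $\ell$, so that
\begin{equation*}
    \E[\Lambda_t] = \E\left[ \sum_{u \in \cN_{t-\ell}} \mathds{1}_{\{\forall s \leq t-\ell,\, X_s(u) \leq g_t(s)\}}\, \psi_t(X_{t-\ell}(u)) \right],
\end{equation*}
where $\psi_t(z) = \P(M_\ell > m_t + x_t - z)$. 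By the many-to-one formula this equals $\e^{t-\ell}\,\E[\mathds{1}_{\{\forall s \leq t-\ell,\, B_s \leq g_t(s)\}}\,\psi_t(B_{t-\ell})]$.

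First I would restrict the expectation to a favorable event for the spine position. Writing $g_t(t-\ell) = \frac{m_t}{t}(t-\ell) + x_t$ and noting $m_t + x_t - g_t(t-\ell) = \frac{m_t}{t}\ell = \sqrt{2}\ell - \frac{3}{2\sqrt2}\frac{\ell\log t}{t}$, a particle sitting at $B_{t-\ell} = g_t(t-\ell) - y$ for bounded $y > 0$ needs its length-$\ell$ subtree to reach roughly $\sqrt{2}\ell + y$, i.e. $\psi_t(g_t(t-\ell) - y) = \P(M_\ell > \sqrt{2}\ell - \tfrac{3}{2\sqrt2}\tfrac{\ell\log t}{t} + y)$. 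Since $\ell \to \infty$ and $\ell < t/\log t$, the correction $\frac{3}{2\sqrt2}\frac{\ell\log t}{t} \to 0$ is negligible, and by Bramson's result (or simply the Lalley--Sellke tightness, which gives a uniform lower bound) there is a constant $c_0 > 0$ and some fixed $y_0$ with $\P(M_\ell > \sqrt2\ell + y_0 + 1) \geq c_0$ for all $\ell$ large. Hence $\psi_t(g_t(t-\ell) - y) \gtrsim 1$ uniformly over $y$ in a fixed interval like $[y_0 + 1, y_0 + 2]$ once $t$ is large.

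It then remains to bound below
\begin{equation*}
    \E[\Lambda_t] \gtrsim \e^{t-\ell}\,\P\left(\forall s \leq t-\ell,\, B_s \leq g_t(s),\; B_{t-\ell} \in g_t(t-\ell) - [y_0+1, y_0+2]\right).
\end{equation*}
This is exactly a ballot-type probability over the straight barrier $g_t$ on $[0, t-\ell]$, and it is handled by the Girsanov transform followed by the lower bound in Lemma~\ref{lem:brownian_estimate_1}, in the same way as the computation of $\E[\Delta_t]$ in~\eqref{eq:lower_bound_expectation_Delta}. Concretely, after the change of measure the factor $\e^{t-\ell}$ is compensated, one picks up $\e^{-\sqrt2 x_t}$ from the shift at the endpoint together with $(t-\ell)^{3/2} \sim t^{3/2}$ from the polynomial prefactor and $\e^{-x_t^2/2(t-\ell) + \frac{3}{2\sqrt2}\frac{x_t\log t}{t}}$ from completing the square; since $\ell \ll t$ one has $x_t^2/2(t-\ell) = x_t^2/2t + O(x_t^2\ell/t^2)$ and the assumption $\ell < x_t^5$ (or more simply $x_t \ll t$) ensures this error is $o(1)$, so the whole thing is $\gtrsim \gamma_t(x_t)$. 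The main technical point to verify is that the ballot estimate of Lemma~\ref{lem:brownian_estimate_1}(ii) applies, i.e. that $x_t(x_2 - y) \leq t-\ell$ with $x_1 = x_t$, $x_2 - y = \frac{m_t}{t}(t-\ell)$ up to bounded terms; but $x_t \ll t$ while the other factor is $\Theta(t)$, so this fails — instead one should use the barrier on $[0, t-\ell]$ with endpoints scaled so that Lemma~\ref{lem:brownian_estimate_1}(ii) is invoked with the \emph{tilted} barrier $\tilde g_t$, exactly as in~\eqref{eq:lower_bound_expectation_Delta}, where $x_2 - y$ is bounded and $x_1 \approx x_t \ll t-\ell$. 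I expect this bookkeeping around which quantities are $\Theta(1)$ versus $\Theta(x_t)$ versus $\Theta(t)$ to be the only real obstacle; everything else is a reprise of the first-moment bound from Lemma~\ref{lem:lower_bound}.
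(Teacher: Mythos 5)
The decisive step in your plan is false. You claim there are a constant $c_0>0$ and a fixed $y_0$ with $\P(M_\ell > \sqrt{2}\ell + y_0 + 1) \geq c_0$ for all large $\ell$, so that $\psi_t(g_t(t-\ell)-y) \gtrsim 1$ on a window of width $O(1)$. But $m_\ell = \sqrt{2}\ell - \frac{3}{2\sqrt{2}}\log\ell$, so $\P(M_\ell > \sqrt{2}\ell + y_0+1) = \P(M_\ell - m_\ell > \frac{3}{2\sqrt{2}}\log\ell + y_0 + 1) \to 0$ as $\ell \to \infty$: tightness of $M_\ell - m_\ell$ gives a uniform constant only at fixed offsets from $m_\ell$, not from $\sqrt{2}\ell$; by Corollary~\ref{cor:upper_bound_2} this probability is in fact of order $\log\ell/\ell^{3/2}$. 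Consequently, restricting the first moment to $B_{t-\ell} \in g_t(t-\ell) - [y_0+1, y_0+2]$ only yields $\E[\Lambda_t] \gtrsim \gamma_t(x_t)\,\log\ell/\ell^{3/2}$, which is \emph{not} $\gtrsim \gamma_t(x_t)$ since $\ell \to \infty$. The mass of $\E[\Lambda_t]$ actually sits at endpoint distances $y$ of order $\sqrt{\ell}$ below the barrier (this is precisely the content of Lemma~\ref{lem:equivalent_expectation}), so no window of bounded width can work; to salvage your route you would have to integrate over $y \asymp \sqrt{\ell}$ and, crucially, supply a matching \emph{lower} bound for $\P(M_\ell > \sqrt{2}\ell + y)$ in that regime, i.e.\ essentially rerun Lemma~\ref{lem:lower_bound} at time $\ell$ with $x \asymp \sqrt{\ell}$ (with some uniformity). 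Your remarks on the ballot estimate are otherwise on target --- after Girsanov one applies Lemma~\ref{lem:brownian_estimate_1}(ii) to the tilted barrier $\tilde{g}_t$, where $x_1 = x_t$ and the endpoint gap is $y$, and the condition $x_t y \lesssim t$ holds in the relevant range --- but this bookkeeping is not where the difficulty lies.

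The paper proves the lemma with no computation at all: if $M_t > m_t + x_t$ and $G_t$ fails, then the time-$(t-\ell)$ ancestor of any particle exceeding $m_t + x_t$ stays below $g_t(s)$ for all $s \leq t-\ell$, so $\Lambda_t \geq 1$; hence $\E[\Lambda_t] \geq \P(M_t > m_t + x_t) - \P(G_t)$, and the claim follows immediately from Lemma~\ref{lem:lower_bound} (which gives $\P(M_t > m_t + x_t) \gtrsim \gamma_t(x_t)$) together with Proposition~\ref{prop:upper_bound_G} (which gives $\P(G_t) \ll \gamma_t(x_t)$). In other words, the second-moment work has already been done in Lemma~\ref{lem:lower_bound}, and the intended argument here is just this one-line comparison rather than a fresh moment estimate.
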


\begin{proof}
    We have $\E \left[ \Lambda_t \right] \geq \P(M_t > m_t + x_t) - \P(G_t)$.
    Then, it suffices to apply Lemma~\ref{lem:lower_bound} and Proposition~\ref{prop:upper_bound_G}.
\end{proof}

\begin{lemma}\label{lem:equivalent_Lambda_Lambda2}
    We have $\E \left[ \Lambda_t^2 \right] \sim \E \left[ \Lambda_t \right]$.
\end{lemma}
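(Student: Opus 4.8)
The plan is to show that $\E\left[\Lambda_t(\Lambda_t-1)\right] = o(\gamma_t(x_t))$ as $t\to\infty$. Since $\E\left[\Lambda_t^2\right] = \E\left[\Lambda_t\right] + \E\left[\Lambda_t(\Lambda_t-1)\right]$ and $\E\left[\Lambda_t\right]\gtrsim\gamma_t(x_t)$ by Lemma~\ref{lem:lower_bound_Lambda}, this will give $\E\left[\Lambda_t^2\right]/\E\left[\Lambda_t\right]\to 1$, which is the assertion. First I would expand $\Lambda_t(\Lambda_t-1) = \sum_{u\neq u'\in\cN_{t-\ell}}\mathds{1}_{E_t^{(u)}}\mathds{1}_{E_t^{(u')}}$ and condition on the BBM up to time $t-\ell$: for distinct $u,u'$ the two subtrees rooted at $u$ and $u'$ over $[t-\ell,t]$ are independent, so, applying the many-to-two formula as in~\eqref{eq:lower_bound_many_to_two} (the two spines coincide on $[0,s]$ and are independent on $[s,t-\ell]$ for a split time $s\in[0,t-\ell]$), I obtain
\begin{equation*}
    \E\left[\Lambda_t(\Lambda_t-1)\right] = 2\int_0^{t-\ell}\e^{2(t-\ell)-s}\,\E\left[\mathds{1}_{\{\forall r\le s,\,B_r\le g_t(r)\}}\,\Phi_{t,s}(B_s)^2\right]\d{s},
\end{equation*}
where, by the Markov property at time $s$,
\begin{equation*}
    \Phi_{t,s}(z) = \P\left(\forall r\le t-\ell-s,\ B_r\le g_t(r+s)-z,\ M_\ell > m_t+x_t-z-B_{t-\ell-s}\right),
\end{equation*}
with $M_\ell$ the maximum of an independent BBM of duration $\ell$; the exponent $2$ on $\Phi_{t,s}$ records the two independent post-split continuations.

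The heart of the argument is an upper bound on $\Phi_{t,s}(z)$, obtained by copying the treatment of $\phi_{t,s}$ in the proof of Lemma~\ref{lem:lower_bound} but replacing the final Brownian estimate by Corollary~\ref{cor:upper_bound_2} for the BBM part; this is where $\ell<t/\log t$ is needed. Writing $a = g_t(s)-z\ge 0$ and $\tau = t-\ell-s$, I would apply the Girsanov transform on $[s,t-\ell]$, decompose over the endpoint $B_{t-\ell-s}$, and note that the event forces $b := g_t(t-\ell)-z-B_{t-\ell-s}\ge 0$ with $m_t+x_t-z-B_{t-\ell-s} = \tfrac{m_t}{t}\ell + b$; the ballot probability over $[s,t-\ell]$ is bounded by Lemma~\ref{lem:brownian_estimate_1}\,(i) when $\tau$ is bounded below by a constant (the barrier being linear with slope slightly below $\sqrt 2$), and simply by dropping the barrier and keeping the Gaussian density when $\tau$ is small (where Lemma~\ref{lem:brownian_estimate_1}\,(i) would introduce a spurious $\tau^{-3/2}$). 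Crucially, the factor $\e^{\sqrt 2 b}$ produced by the Girsanov transform cancels exactly the $\e^{-\sqrt 2 b}$ coming from Corollary~\ref{cor:upper_bound_2}, and carrying out the Gaussian integral over $b$ (of effective variance $\sigma^2 = \tau\ell/(t-s)$, whose completed square reproduces a factor $\e^{-a^2/2(t-s)}$) leads to a bound of the shape
\begin{equation*}
    \Phi_{t,s}(z)\ \lesssim\ \e^{-\tau}\,\e^{-\sqrt 2\,a}\,\e^{-a^2/2(t-s)}\,\frac{1}{\ell^{3/2}}\,R_{t,s}(a),
\end{equation*}
where $R_{t,s}(a)$ is a polynomially growing factor in $a$, $\log\ell$ and $\sigma$ (of order $\sigma^3+\sigma^2\log\ell$ for $\tau$ large, and of order $a(a+\log\ell)$ for $\tau$ small).

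Next I would square this bound, substitute it into the $s$-integral, and run the Girsanov transform on $[0,s]$ exactly as after~\eqref{eq:lower_bound_ballot_t-s} --- which turns $\e^{2(t-\ell)-s}\e^{-2\sqrt 2 g_t(s)}$ into $\e^{-3s}$ times a $t$-dependent constant and replaces $g_t$ by $\tilde{g}_t$ --- so that, after evaluating the remaining Gaussian integral in $y = \tilde{g}_t(s)-B_s$ by completing the square (using $1\ll x_t\ll t$, and $\ell<x_t^5$ to dispose cheaply of one correction term), I am reduced to
\begin{equation*}
    \E\left[\Lambda_t(\Lambda_t-1)\right]\ \lesssim\ \gamma_t(x_t)\int_0^{t-\ell}\left(\frac{1}{(t-s)^3}+\frac{(\log\ell)^2}{\tau\ell(t-s)^2}\right)\frac{t^{3/2}}{s^{3/2}}\,\e^{-x_t^2(t-s)/(2st)}\,\d{s},
\end{equation*}
the regime where $\tau = t-\ell-s$ is small contributing only an even smaller error term. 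I would then bound the integral by splitting the range of $s$ in the spirit of the proof of Proposition~\ref{prop:upper_bound_G}: for $s$ near $0$, the factor $\e^{-x_t^2(t-s)/(2st)}\le\e^{-x_t^2/(2s)}$ defeats the blow-up of $s^{-3/2}$; in the bulk range the integrand has a negligible total; and for $s$ near $t-\ell$ the integral of $(t-s)^{-3}$ is of order $\ell^{-2}$. Using $t-\ell\sim t$ (valid because $\ell<t/\log t$) and $\ell\to\infty$, the whole integral tends to $0$ (with decay of order $\ell^{-2}$ up to logarithmic factors), hence $\E\left[\Lambda_t(\Lambda_t-1)\right] = o(\gamma_t(x_t))$, which completes the proof.

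The main obstacle is the regime $\tau = t-\ell-s\to 0$, that is $s$ close to $t-\ell$: there the post-split Brownian segment is too short to produce any useful ballot gain, so the cost of the event must be carried entirely by the two independent BBMs of length $\ell$, and one must check that the resulting $\ell^{-3}$ from the two applications of Corollary~\ref{cor:upper_bound_2} survives integration over $s$ near $t-\ell$ and dominates the large factor $\e^{2(t-\ell)-s}$. This is precisely the point at which the estimate improves on Lemma~\ref{lem:lower_bound}, whose analogous $s$-integral is only $O(1)$ rather than $o(1)$.
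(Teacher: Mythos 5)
Your overall strategy is the same as the paper's: expand the off-diagonal term via the many-to-two formula with split time $s$, bound the post-split factor $\Phi_{t,s}$ (the paper's $\psi_{t,s}$) by Girsanov plus the ballot estimate of Lemma~\ref{lem:brownian_estimate_1} and Corollary~\ref{cor:upper_bound_2} for the terminal BBM of length $\ell$ (with the $\e^{\sqrt2 b}$/$\e^{-\sqrt2 b}$ cancellation you point out), then integrate over the split time and compare with $\E[\Lambda_t]\gtrsim\gamma_t(x_t)$ from Lemma~\ref{lem:lower_bound_Lambda}. However, there is a genuine gap at the step where you ``evaluate the remaining Gaussian integral in $y=\tilde g_t(s)-B_s$ by completing the square'' and still display the factor $\e^{-x_t^2(t-s)/(2st)}$ as a spectator in front of the $s$-integral. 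Expanding $(\tilde g_t(s)-y)^2/2s$ produces the cross term $\e^{+x_t y/s}$ inside the $y$-integral, and since the effective variance of the $y$-integral here is of order $\min(s,t-s)$ (not of order $1$: there is no analogue of the $\e^{-y^2/2}$ overshoot factor that saves the day in \eqref{eq:upper_bound_bounded_integral}--\eqref{eq:upper_bound_finite_bound}), completing the square consumes essentially all of $\e^{-x_t^2(t-s)/(2st)}$: what survives is only a factor of the type $\e^{-(\sqrt2-o(1))y^*-\frac{x_t^2(t-s)}{2t(t+s)}}$ with $y^*=\frac{x_t(t-s)}{t+s}$, not the retained $\e^{-x_t^2(t-s)/(2st)}$. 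For $s$ of order $1$ your displayed integrand is of size $\e^{-x_t^2/2s}$, which is far smaller than what the computation you describe actually yields (roughly an extra $\e^{-\sqrt2 x_t}$ up to polynomial factors), so the displayed inequality is not justified — and it is precisely this unjustified factor that you invoke to tame the non-integrable $s^{-3/2}$ singularity at $s\to0$ (``$\e^{-x_t^2/(2s)}$ defeats the blow-up'').

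The hole is local and fixable, and both fixes confirm your conclusion: either keep the exact square-completed expression, whose surviving decay $\e^{-(\sqrt2-o(1))x_t(t-s)/(t+s)}$ for small $s$ reflects the correct heuristic that an early split forces two almost independent copies to each perform the deviation (contribution $O(\gamma_t(x_t)^2)$ up to polynomial factors); or do as the paper does and treat the small-$s$ range (there $s<\ell^{1/3}$) separately by replacing the ballot estimate on $[0,s]$ with the plain Gaussian density, trading the $x_t y/s^{3/2}$ prefactor for $s^{-1/2}$ and yielding a contribution $\lesssim\gamma_t(x_t)\,\ell^{1/6}/x_t$ as in \eqref{eq:equivalent_Lambda_Lambda2_2}, which is where the hypothesis $\ell<x_t^5$ is actually used. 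A smaller point of the same nature: the term $\frac{(\log\ell)^2}{\tau\ell(t-s)^2}$ in your displayed bound is not integrable as $\tau=t-\ell-s\to0$, so the cut-off you allude to for small $\tau$ (dropping the barrier and keeping the Gaussian density, as the paper does on $(t-\ell-\ell^{1/3},t-\ell]$) must be made explicit rather than left as ``an even smaller error term''. With these repairs your argument coincides with the paper's proof, including the regions of $s$ that must be handled by modified estimates.
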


\begin{proof}
    Since
    \begin{equation*}
        \E \left[ \Lambda_t^2 \right] = \E \left[ \Lambda_t \right] + \E \left[ \sum_{u, v \in \cN_{t-\ell}, u \neq v} \mathds{1}_{E_t^{(u)}} \mathds{1}_{E_t^{(v)}} \right],
    \end{equation*}
    it suffices to show that the second term is negligible compared to $\E \left[ \Lambda_t \right]$.
    By Lemma~\ref{lem:many-to-two} and the Markov property,
    \begin{equation}\label{eq:equivalent_Lambda_Lambda2_many_to_two}
        \E \left[ \sum_{u, v \in \cN_{t-\ell}, u \neq v} \mathds{1}_{E_t^{(u)}} \mathds{1}_{E_t^{(v)}} \right] = 2 \int_0^{t-\ell} \e^{2(t-\ell)-s} \E \left[ \mathds{1}_{\{\forall r \leq s, B_r \leq g_t(r)\}} \psi_{t,s}(B_s)^2 \right] \d{s},
    \end{equation}
    with
    \begin{equation}\label{eq:equivalent_Lambda_Lambda2_definition_phi}
        \psi_{t,s}(z) = \P \left( \forall r \leq t-\ell-s, B_r \leq g_t(r+s) - z, M_\ell > m_t + x_t - B_{t-\ell-s} - z \right).
    \end{equation}
    
    We first deal with the integration over $s \in [\ell^{1/3}, t-\ell-\ell^{1/3}]$.
    Slight modifications of the argument will handle the regions $s \in [0, \ell^{1/3})$ and $s \in (t-\ell-\ell^{1/3}, t-\ell]$.
    Decomposing over the values of $B_s$, we rewrite
    \begin{equation}\label{eq:equivalent_Lambda_Lambda2_decomposition}
        \E \left[ \mathds{1}_{\{\forall r \leq s, B_r \leq g_t(r)\}} \psi_{t,s}(B_s)^2 \right] = \int_0^\infty \P \left( \forall r \leq s, B_r \leq g_t(r), B_s \in g_t(s) - \d{y} \right) \psi_{t,s} \left( g_t(s) - y \right)^2.
    \end{equation}
    Using the Girsanov transformation~\eqref{eq:girsanov_transformation} and Lemma~\ref{lem:brownian_estimate_1}, we obtain
    \begin{equation}\label{eq:equivalent_Lambda_Lambda2_decomposition_1}
        \P \left( \forall r \leq s, B_r \leq g_t(r), B_s \in g_t(s) - \d{y} \right) \lesssim \e^{-\sqrt{2} \left( \tilde{g}_t(s) - y \right) - s} \frac{x_t y \e^{- \left( \tilde{g}_t(s) - y \right)^2/2s}}{s^{3/2}} \d{y}.
    \end{equation}
    Besides, decomposing over the values of $B_{t-\ell-s}$ in the definition~\eqref{eq:equivalent_Lambda_Lambda2_definition_phi}, we rewrite $\psi_{t,s} \left( g_t(s) - y \right)$ as
    \begin{equation*}
        \int_0^\infty \P \left( \forall r \leq t-\ell-s, B_r \leq \frac{m_t}{t}r + y, B_{t-\ell-s} \in \frac{m_t}{t}(t-\ell-s) + y - \d{z} \right) \P \left( M_\ell > \frac{m_t}{t}\ell + z \right).
    \end{equation*}
    The first factor in the above integral can be controlled by using the Girsanov transformation~\eqref{eq:girsanov_transformation} and Lemma~\ref{lem:brownian_estimate_1},
    \begin{multline}\label{eq:equivalent_Lambda_Lambda2_decomposition_2}
        \P \left( \forall r \leq t-\ell-s, B_r \leq \frac{m_t}{t}r + y, B_{t-\ell-s} \in \frac{m_t}{t}(t-\ell-s) + y - \d{z} \right) \\
        \lesssim \e^{-\sqrt{2} \left( -\frac{3}{2\sqrt{2}} \frac{\log t}{t} (t-\ell-s) + y - z \right) - (t-\ell-s)} \frac{y z \e^{- \left( -\frac{3}{2\sqrt{2}} \frac{\log t}{t} (t-\ell-s) + y - z \right)^2/2(t-\ell-s)}}{(t-\ell-s)^{3/2}} \d{z}.
    \end{multline}
    As for the second factor, it is controlled by Corollary~\ref{cor:upper_bound_2},
    \begin{equation}\label{eq:equivalent_Lambda_Lambda2_decomposition_3}
        \P \left( M_\ell > \frac{m_t}{t}\ell + z \right) \lesssim \frac{z + \log \ell}{\ell^{3/2}} \e^{-\sqrt{2}z - \frac{z^2}{2\ell} + \frac{3}{2\sqrt{2}} \frac{z \log t}{t}}.
    \end{equation}
    Combining~\eqref{eq:equivalent_Lambda_Lambda2_decomposition_2} and~\eqref{eq:equivalent_Lambda_Lambda2_decomposition_3}, we obtain that $\psi_{t,s} \left( g_t(s) - y \right)$ is bounded, up to a multiplicative constant, by
    \begin{equation}\label{eq:equivalent_Lambda_Lambda2_decomposition_phi_1}
        \frac{y \e^{\frac{3}{2} \frac{\log t}{t} (t-\ell-s) - (\sqrt{2} - o(1))y - (t-\ell-s)}}{(t-\ell-s)^{3/2} \ell^{3/2}} \int_0^\infty z(z + \log \ell) \e^{-z^2/2\ell} \e^{-(y-z)^2/2(t-\ell-s)} \d{z},
    \end{equation}
    where we denote by $o(1)$ any function that converges to $0$ as $t \to \infty$, uniformly in all the other variables.
    We can bound the last exponential by $1$.
    It follows that
    \begin{equation}\label{eq:equivalent_Lambda_Lambda2_decomposition_phi_2}
        \psi_{t,s} \left( g_t(s) - y \right) \lesssim \frac{y \e^{\frac{3}{2} \frac{\log t}{t} (t-\ell-s) - (\sqrt{2} - o(1))y - (t-\ell-s)}}{(t-\ell-s)^{3/2}}.
    \end{equation}
    Then we insert~\eqref{eq:equivalent_Lambda_Lambda2_decomposition_1} and~\eqref{eq:equivalent_Lambda_Lambda2_decomposition_phi_2} into~\eqref{eq:equivalent_Lambda_Lambda2_decomposition}.
    Multiplying the resulting bound by $\e^{2(t-\ell)-s}$ and integrating over $[\ell^{1/3}, t-\ell-\ell^{1/3}]$ yields
    \begin{multline*}
        \int_{\ell^{1/3}}^{t-\ell-\ell^{1/3}} \e^{2(t-\ell)-s} \E \left[ \mathds{1}_{\{\forall r \leq s, B_r \leq g_t(r)\}} \psi_{t,s}(B_s)^2 \right] \d{s} \\
        \lesssim x_t \e^{-\sqrt{2}x_t} \int_{\ell^{1/3}}^{t-\ell-\ell^{1/3}} \frac{\e^{\frac{3}{2} \frac{\log t}{t} (t-\ell) + \frac{3}{2} \frac{\log t}{t} (t-\ell-s)}}{s^{3/2}(t-\ell-s)^3} \int_0^\infty y^3 \e^{-(\sqrt{2} - o(1))y - \left( \tilde{g}_t(s) - y \right)^2/2s} \d{y} \d{s}.
    \end{multline*}
    We can use that, for $t$ large enough and for $s \in [\ell^{1/3}, t-\ell-\ell^{1/3}]$,
    \begin{equation}\label{eq:equivalent_Lambda_Lambda2_log}
        \frac{\log t}{t} (t-\ell) + \frac{\log t}{t} (t-\ell-s) \leq \log (t-\ell)  +  \log (t-\ell-s),
    \end{equation}
    and
    \begin{equation*}
        \e^{- \left( \tilde{g}_t(s) - y \right)^2/2s} \leq \e^{- \left( \tilde{g}_t(s) - y \right)^2/2t} \leq \e^{-\frac{x_t^2}{2t} + \frac{3}{2\sqrt{2}} \frac{x_t \log t}{t} + o(1)y + o(1)}.
    \end{equation*}
    It follows that
    \begin{align}
        \int_{\ell^{1/3}}^{t-\ell-\ell^{1/3}} & \e^{2(t-\ell)-s} \E \left[ \mathds{1}_{\{\forall r \leq s, B_r \leq g_t(r)\}} \psi_{t,s}(B_s)^2 \right] \d{s} \nonumber \\
        &\lesssim \gamma_t(x_t) \int_{\ell^{1/3}}^{t-\ell-\ell^{1/3}} \frac{(t-\ell)^{3/2}}{s^{3/2}(t-\ell-s)^{3/2}} \int_0^\infty y^3 \e^{-(\sqrt{2} - o(1))y} \d{y} \d{s} \nonumber \\
        &\lesssim \gamma_t(x_t) \ell^{-1/6}. \label{eq:equivalent_Lambda_Lambda2_1}
    \end{align}
    
    We treat the region $s \in [0, \ell^{1/3})$ in the same way, except that we bound the left-hand side of~\eqref{eq:equivalent_Lambda_Lambda2_decomposition_1} by
    \begin{equation*}
        \P \left( B_s \in g_t(s) - \d{y} \right) = \e^{-\sqrt{2} \left( \tilde{g}_t(s) - y \right) - s} \frac{\e^{- \left( \tilde{g}_t(s) - y \right)^2/2s}}{\sqrt{2 \pi s}} \d{y}.
    \end{equation*}
    This yields
    \begin{align}
        \int_0^{\ell^{1/3}} \e^{2(t-\ell)-s} \E \left[ \mathds{1}_{\{\forall r \leq s, B_r \leq g_t(r)\}} \psi_{t,s}(B_s)^2 \right] \d{s} &\lesssim \frac{\gamma_t(x_t)}{x_t} \int_0^{\ell^{1/3}} \frac{(t-\ell)^{3/2}}{\sqrt{s} (t-\ell-s)^{3/2}} \d{s} \nonumber \\
        &\lesssim \gamma_t(x_t) \frac{\ell^{1/6}}{x_t}. \label{eq:equivalent_Lambda_Lambda2_2}
    \end{align}
    
    Concerning the region $s \in (t-\ell-\ell^{1/3}, t-\ell]$, we do the same as for $s \in [\ell^{1/3}, t-\ell-\ell^{1/3}]$ with three modifications.
    First, dropping the barrier, we bound the left-hand side of~\eqref{eq:equivalent_Lambda_Lambda2_decomposition_2}, up to a multiplicative constant, by
    \begin{equation*}
        \e^{-\sqrt{2} \left( -\frac{3}{2\sqrt{2}} \frac{\log t}{t} (t-\ell-s) + y - z \right) - (t-\ell-s)} \frac{\e^{- \left( -\frac{3}{2\sqrt{2}} \frac{\log t}{t} (t-\ell-s) + y - z \right)^2/2(t-\ell-s)}}{\sqrt{t-\ell-s}} \d{z}.
    \end{equation*}
    Then, in the counterpart of~\eqref{eq:equivalent_Lambda_Lambda2_decomposition_phi_1}, we keep $\e^{-(y-z)^2/2(t-\ell-s)}$ instead of $\e^{-z^2/2\ell}$.
    These first two modifications allow us to replace the inequality~\eqref{eq:equivalent_Lambda_Lambda2_decomposition_phi_2} with
    \begin{equation*}
        \psi_{t,s} \left( g_t(s) - y \right) \lesssim \frac{\e^{\frac{3}{2} \frac{\log t}{t} (t-\ell-s) - (\sqrt{2} - o(1))y - (t-\ell-s)}}{\ell^{3/2}}(\ell^{1/6} + y).
    \end{equation*}
    Finally, instead of~\eqref{eq:equivalent_Lambda_Lambda2_log}, we use that
    \begin{equation*}
        \frac{\log t}{t} (t-\ell) + \frac{\log t}{t} (t-\ell-s) \leq \log (t-\ell) + o(1).
    \end{equation*}
    These three modifications yield
    \begin{align}
        \int_{t-\ell-\ell^{1/3}}^{t-\ell} \e^{2(t-\ell)-s} \E \left[ \mathds{1}_{\{\forall r \leq s, B_r \leq g_t(r)\}} \psi_{t,s}(B_s)^2 \right] \d{s} &\lesssim \gamma_t(x_t) \int_{t-\ell-\ell^{1/3}}^{t-\ell} \frac{(t-\ell)^{3/2}}{s^{3/2}\ell^{8/3}} \d{s} \nonumber \\
        &\lesssim \gamma_t(x_t) \ell^{-7/3}. \label{eq:equivalent_Lambda_Lambda2_3}
    \end{align}
    
    By~\eqref{eq:equivalent_Lambda_Lambda2_1}, \eqref{eq:equivalent_Lambda_Lambda2_2}, \eqref{eq:equivalent_Lambda_Lambda2_3}, and Lemma~\ref{lem:lower_bound_Lambda}, the right-hand side of~\eqref{eq:equivalent_Lambda_Lambda2_many_to_two} is negligible compared to $\E \left[ \Lambda_t \right]$, which concludes the proof.
\end{proof}

\begin{remark}\label{rem:equivalent_Lambda_Lambda2}
    Since $\Lambda_t$ takes integer values, a consequence of Lemma~\ref{lem:equivalent_Lambda_Lambda2} is that
    \begin{equation*}
        \E \left[ \Lambda_t \mathds{1}_{\{\Lambda_t \geq 2\}} \right] \leq \E \left[ \Lambda_t^2 \right] - \E \left[ \Lambda_t \right] \ll \E \left[ \Lambda_t \right] = \P(\Lambda_t = 1) + \E \left[ \Lambda_t \mathds{1}_{\{\Lambda_t \geq 2\}} \right].
    \end{equation*}
    Therefore, $\E \left[ \Lambda_t \mathds{1}_{\{\Lambda_t \geq 2\}} \right] \ll \P(\Lambda_t = 1)$, which implies that $\P(\Lambda_t \geq 2) \ll \P(\Lambda_t = 1)$.
\end{remark}

\begin{proof}[Proof of Proposition~\ref{prop:expectation_equivalent}]
    First,
    \begin{equation}\label{eq:expectation_equivalent_upper_bound}
        \P(M_t > m_t + x_t) \leq \E \left[ \Lambda_t \right] + \P(G_t) \sim \E \left[ \Lambda_t \right],
    \end{equation}
    by Proposition~\ref{prop:upper_bound_G} and Lemma~\ref{lem:lower_bound_Lambda}.
    Besides, by applying successively Cauchy-Schwarz and Lemma~\ref{lem:equivalent_Lambda_Lambda2}, we obtain
    \begin{equation}\label{eq:expectation_equivalent_lower_bound}
        \P(M_t > m_t + x_t) \geq \frac{\E \left[ \Lambda_t \right]^2}{\E \left[ \Lambda_t^2 \right]} \sim \E \left[ \Lambda_t \right].
    \end{equation}
    Combining~\eqref{eq:expectation_equivalent_upper_bound} and~\eqref{eq:expectation_equivalent_lower_bound}, we deduce the equivalent in Proposition~\ref{prop:expectation_equivalent}.
\end{proof}

\section{Proof of the equivalent}

The main result of this section is the following equivalent.

\begin{proposition}\label{prop:equivalent_expectation}
    As soon as $1 \ll x_t \ll t$ and $1 \ll \ell \ll \min(t/\log t, x_t^5, t^2/x_t^2)$, we have $\E \left[ \Lambda_t \right] \sim C^* \gamma_t(x_t)$, where $C^*$ is the constant defined in Theorem~\ref{th:moderate_deviations}.
\end{proposition}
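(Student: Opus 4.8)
The plan is to compute $\E[\Lambda_t]$ explicitly via the many-to-one formula, then factor the resulting expectation into a "bulk" contribution that produces $\gamma_t(x_t)$ and a "tip" contribution over the last time-window $[t-\ell, t]$ that converges to $C^*$. Applying the many-to-one formula to the definition~\eqref{eq:definition_lambda},
\begin{equation*}
    \E[\Lambda_t] = \e^{t-\ell} \P\left( \forall s \leq t-\ell,\ B_s \leq g_t(s),\ \widetilde M_\ell > m_t + x_t - B_{t-\ell} \right),
\end{equation*}
where $\widetilde M_\ell$ is the maximum of an independent BBM run for time $\ell$. Decomposing over $B_{t-\ell} = g_t(t-\ell) - y$ and then applying the Girsanov transform exactly as in the proofs of Lemmas~\ref{lem:lower_bound} and~\ref{lem:equivalent_Lambda_Lambda2}, the factor $\e^{t-\ell}$ cancels against $\e^{-\sqrt2 B_{t-\ell} - (t-\ell)}$ and one is left, after expanding $\widetilde g_t(t-\ell) = x_t - \frac{3}{2\sqrt2}\frac{\log t}{t}(t-\ell)$, with
\begin{equation*}
    \E[\Lambda_t] = \e^{-\sqrt 2 x_t + \frac{3}{2\sqrt2}\frac{x_t\log t}{t} - \frac{3}{2\sqrt2}\frac{\ell\log t}{t}\cdot(\cdots)} \int_0^\infty \P\left( \forall s \leq t-\ell,\ B_s \leq \widetilde g_t(s),\ B_{t-\ell} \in \widetilde g_t(t-\ell) - \d y \right) \e^{\sqrt 2 y}\, \P\left( \widetilde M_\ell > \sqrt 2 \ell + y \right) \d y,
\end{equation*}
up to bookkeeping of the $\log t$ terms. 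The inner probability over the Brownian path is governed by Lemmas~\ref{lem:brownian_estimate_1}: on the range $x_t(x_t-y)\le t$ it is asymptotically $\frac{x_t\, y}{(t-\ell)^{3/2}}\e^{-(\widetilde g_t(t-\ell)-y)^2/2(t-\ell)}\d y$, and the exponent expands (using $\widetilde g_t(t-\ell) \approx x_t$ and $\ell \ll t^2/x_t^2$, which guarantees $x_t^2\ell/t^2 \to 0$ so the $\ell$-shift is negligible) as $-\frac{x_t^2}{2t} + \frac{3}{2\sqrt2}\frac{x_t\log t}{t} + o(1) + o(1)y$.

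Assembling these pieces, $\E[\Lambda_t] \sim \gamma_t(x_t) \cdot \sqrt{\tfrac{2}{\pi}}\int_0^\infty y\,\e^{\sqrt2 y}\,\P(\widetilde M_\ell > \sqrt2\ell + y)\,\d y$ once one checks that the $x_t$ prefactor combines with $(t-\ell)^{-3/2} \sim t^{-3/2}$ and the residual Gaussian/$\log t$ factors to reconstruct exactly $\gamma_t(x_t)$, and that the $\sqrt{2/\pi}$ normalization comes out of matching constants in Lemma~\ref{lem:brownian_estimate_1} (indeed~\eqref{eq:brownian_estimate_0} has the $(2\pi t)^{-1/2}$ prefactor). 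Since $\ell = \ell_t \to \infty$, the $y$-integral converges to $C^*$ by definition~\eqref{eq:definition_gamma}; the convergence is a genuine limit, not merely along a subsequence, because $C^*$ is defined as a limit over $\ell \to \infty$ and $\ell_t$ is an arbitrary slowly-diverging sequence. One must also verify the limit defining $C^*$ actually exists and is finite and positive: finiteness follows from Corollary~\ref{cor:upper_bound_1}, which gives $\P(\widetilde M_\ell > \sqrt2\ell + y) \lesssim (1+y_+)\e^{-\sqrt2 y}$ uniformly in $\ell$ so the integrand is dominated by $y(1+y_+)\e^{-o(1)\cdot y}$... wait, that only gives a bound growing polynomially; positivity and the existence of the limit will need the Lalley--Sellke description or at least monotonicity/Cauchy arguments — this can be sidestepped by noting $\gamma_t(x_t)$ is the correct order (Lemmas~\ref{lem:upper_bound},~\ref{lem:lower_bound}) so the ratio $\E[\Lambda_t]/\gamma_t(x_t)$ is bounded above and below, hence any subsequential limit of the $y$-integral is finite and positive, and the identification with a single constant $C^*$ follows from Proposition~\ref{prop:expectation_equivalent}.

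**The main obstacle** will be the error control in the tail of the $y$-integral: one needs to restrict $y$ to a range like $y \leq \ell^{1/4}$ (or $y$ small compared to $x_t$) so that the approximations $\widetilde g_t(t-\ell) - y \approx x_t$, "drop the ballot barrier's lower-range restriction $x_t(x_t-y)\le t$", and "$\e^{-(y-z)^2/\cdots}\approx 1$" are all valid with $o(1)$ errors, and then separately show the contribution of $y > \ell^{1/4}$ is negligible. For the latter, Corollary~\ref{cor:upper_bound_1} controls $\P(\widetilde M_\ell > \sqrt2\ell+y)$ but only by $(1+y)\e^{-\sqrt2 y}$, which after the $\e^{\sqrt 2 y}$ weight leaves $y(1+y)$ — not summable — so one must instead keep the Gaussian factor $\e^{-(\widetilde g_t(t-\ell)-y)^2/2(t-\ell)}$ from Lemma~\ref{lem:brownian_estimate_1}(i), which for $y$ in this range contributes roughly $\e^{-c\, y\, x_t/t}$; combined with the condition $\ell \ll \min(x_t^5, t^2/x_t^2)$ this should force the truncation error down. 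This balancing of the three constraints on $\ell$ is exactly why Proposition~\ref{prop:equivalent_expectation} demands the sharper hypothesis $\ell \ll \min(t/\log t, x_t^5, t^2/x_t^2)$ rather than just $\ell < t$, and getting every $o(1)$ uniform in the relevant variables is the delicate part of the argument.
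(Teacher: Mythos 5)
Your overall route coincides with the paper's: compute $\E[\Lambda_t]$ by the many-to-one formula and Girsanov, linearize the exact ballot formula~\eqref{eq:brownian_estimate_0} to produce the factor $\sqrt{2/\pi}\,x_t y/(t-\ell)^{3/2}$, and recognize the remaining $y$-integral as the quantity defining $C^*$ (this is the paper's Lemma~\ref{lem:equivalent_expectation}). However, your error control is mis-calibrated in a way that would break the argument. The dominant contribution to $\int_0^\infty y\,\e^{\sqrt2 y}\,\P(M_\ell>\tfrac{m_t}{t}\ell+y)\d{y}$ sits at $y$ of order $\sqrt\ell$ (as the paper points out before Lemma~\ref{lem:equivalent_expectation}): by Corollary~\ref{cor:upper_bound_2} the integrand is $\lesssim y(y+\log\ell)\ell^{-3/2}\e^{-y^2/2\ell}$ up to harmless factors, so the range $y\le\ell^{1/4}$ contributes only $O(\ell^{-3/4}\log\ell)=o(1)$, while the whole integral stays bounded below by a positive constant; truncating at $y\le\ell^{1/4}$, or at $y$ small compared to $x_t$ (note that $x_t$ may itself be $\ll\sqrt\ell$, since only $\ell\ll x_t^5$ is assumed), discards the main term rather than an error term. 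The correct window is $a(\ell)\le y\le b(\ell)$ with $1\ll a(\ell)\ll\sqrt\ell\ll b(\ell)\ll t/x_t$; the constraint $b(\ell)\ll t/x_t$ is what makes the linearization $1-\e^{-2x_t y/(t-\ell)}\sim 2x_t y/(t-\ell)$ uniform, and its compatibility with $b(\ell)\gg\sqrt\ell$ is exactly where the hypothesis $\ell\ll t^2/x_t^2$ enters. Your mechanism for the upper tail is also backwards: the Gaussian factor from Lemma~\ref{lem:brownian_estimate_1} contributes an \emph{adverse} factor $\e^{+x_t y/(t-\ell)}$, not a decay $\e^{-c\,y\,x_t/t}$; what kills the tail $y>b(\ell)$ is the factor $\e^{-y^2/2\ell}$ from Corollary~\ref{cor:upper_bound_2}, against which $\e^{x_t y/(t-\ell)}$ is harmless precisely because $x_t\sqrt\ell/t\to0$.

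The second gap is the identification of the constant. Your first claim, that the $y$-integral converges to $C^*$ ``by definition'', is circular: the existence of the limit defining $C^*$ is not known a priori, and the paper's proof of this very proposition is where it is established. The sidestep you propose (subsequential limits are bounded away from $0$ and $\infty$, and ``identification follows from Proposition~\ref{prop:expectation_equivalent}'') is not yet an argument: boundedness only gives compact subsequential limits, and Proposition~\ref{prop:expectation_equivalent} gives $\E[\Lambda_t]\sim\P(M_t>m_t+x_t)$, neither of which pins down a single constant. The missing step is the double-limit argument: write $u(t)=\P(M_t>m_t+x_t)/\gamma_t(x_t)$, which does not depend on $\ell$, and let $v(t,\ell)$ be the $y$-integral; since $u(t)-v(t,\ell_t)\to0$ for \emph{every} admissible $(\ell_t)$ and $u$ is bounded (Lemma~\ref{lem:upper_bound}), a contradiction argument yields $\lim_{\ell\to\infty}\limsup_{t\to\infty}|u(t)-v(t,\ell)|=0$; then for fixed $\ell$ one has $v(t,\ell)\to v_\ell$ as $t\to\infty$ (this is also where replacing the threshold $\tfrac{m_t}{t}\ell+y$ by $\sqrt2\ell+y$ is justified, at fixed $\ell$ --- doing it directly with $\ell=\ell_t$ as you do requires a uniformity you have not established), and one concludes that $\lim_t u(t)$ and $\lim_\ell v_\ell$ both exist and coincide; this common value is $C^*$, and it lies in $(0,\infty)$ by Lemmas~\ref{lem:upper_bound} and~\ref{lem:lower_bound}. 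Without this step your argument shows at best that $\E[\Lambda_t]/\gamma_t(x_t)$ remains between two positive constants, not the claimed equivalent.
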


To prove Proposition~\ref{prop:equivalent_expectation}, we start by identifying the positions at time $t-\ell$ of the particles that mainly contribute to $\E \left[ \Lambda_t \right]$.
To this end, similarly to~\eqref{eq:definition_lambda}, define, for $A$ any Borel set of $[0, \infty)$ and $u \in \cN_{t-\ell}$,
\begin{equation*}
    E_{t,A}^{(u)} = E_t^{(u)} \cap \{X_{t-\ell}(u) \in g_t(t-\ell) - A\} \quad \text{and} \quad \Lambda_{t,A} = \sum_{u \in \cN_{t-\ell}} \mathds{1}_{E_{t,A}^{(u)}}.
\end{equation*}
Consider $a(s)$ and $b(s)$ any increasing functions such that $1 \ll a(\ell) \ll \sqrt{\ell} \ll b(\ell) \ll \min(\sqrt{t}, t/x_t)$.
Note that such functions exist as soon as the conditions of Proposition~\ref{prop:equivalent_expectation} are satisfied.
The following lemma shows that $\E \left[ \Lambda_t \right]$ is mainly supported by particles whose distance from the barrier $g_t(s)$ is of order $\sqrt{\ell}$ at time $s = t-\ell$.

\begin{lemma}\label{lem:equivalent_expectation}
    With the above choices of $\ell$, $a$, and $b$, taking $I(\ell)= [a(\ell), b(\ell)]$, we have
    \begin{equation}\label{eq:equivalent_expectation}
        \E \left[ \Lambda_t \right] \sim \E \left[ \Lambda_{t,I(\ell)} \right] \sim \sqrt{\frac{2}{\pi}} \gamma_t(x_t) \int_0^\infty y \e^{\sqrt{2}y} \P \left( M_\ell > \frac{m_t}{t}\ell + y \right) \d{y},
    \end{equation}
    and the equivalent still holds if we restrict the integration domain to $I(\ell)$.
\end{lemma}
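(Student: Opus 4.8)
The strategy is to compute $\E[\Lambda_{t,A}]$ explicitly via the many-to-one formula, obtaining an integral over the displacement $y$ at time $t-\ell$ (measured from the barrier), and then to identify which range of $y$ carries the mass and compute the limit. By the many-to-one formula,
\begin{equation*}
    \E\left[\Lambda_{t,A}\right] = \e^{t-\ell}\,\P\left(\forall s\leq t-\ell,\; B_s\leq g_t(s),\; B_{t-\ell}\in g_t(t-\ell)-A,\; \widetilde{M}_\ell > m_t+x_t - B_{t-\ell}\right),
\end{equation*}
where $\widetilde M_\ell$ is the maximum of an independent BBM at time $\ell$ started from $0$. Decomposing over the value $y$ of $g_t(t-\ell)-B_{t-\ell}$ and using independence, the third event contributes the factor $\P(M_\ell > \frac{m_t}{t}\ell + y)$ (since $m_t + x_t - g_t(t-\ell) = m_t + x_t - \frac{m_t}{t}(t-\ell) - x_t = \frac{m_t}{t}\ell$), and the ballot-type probability for $B$ is handled exactly as in~\eqref{eq:lower_bound_expectation_Delta}: Girsanov to tilt by $\e^{-\sqrt2 B_s - (t-\ell)}$, then Lemma~\ref{lem:brownian_estimate_1}. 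This produces
\begin{equation*}
    \E\left[\Lambda_{t,A}\right] \asymp (t-\ell)^{3/2}\,\e^{-\sqrt2 x_t}\int_{A} \frac{x_t\, y}{(t-\ell)^{3/2}}\,\e^{\sqrt2 y - (\widetilde g_t(t-\ell)-y)^2/2(t-\ell)}\,\P\!\left(M_\ell > \tfrac{m_t}{t}\ell+y\right)\d{y},
\end{equation*}
with matching upper and lower bounds whenever we restrict to $y$ with $x_t\,y\lesssim t$ (which $I(\ell)$ satisfies since $b(\ell)\ll t/x_t$).

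**Extracting the Gaussian prefactor.** On the relevant range $y\in I(\ell)$, i.e. $1\ll y \ll t/x_t$, I would expand the exponent $-(\widetilde g_t(t-\ell)-y)^2/2(t-\ell)$. Writing $\widetilde g_t(t-\ell) = x_t - \frac{3}{2\sqrt2}\frac{\log t}{t}(t-\ell)$ and expanding the square, the leading term is $-x_t^2/2(t-\ell) = -\frac{x_t^2}{2t} - \frac{x_t^2\ell}{2t(t-\ell)}$; the cross term with $y$ gives $+\frac{x_t y}{t-\ell}$ plus the logarithmic correction $\frac{3}{2\sqrt2}\frac{x_t\log t}{t}\cdot\frac{t-\ell}{t-\ell}$ which, combined with $-\frac{3}{2\sqrt2}\frac{\log t}{t}\cdot(\text{stuff})$, reconstructs the $\frac{3}{2\sqrt2}\frac{x_t\log t}{t}$ appearing in $\gamma_t(x_t)$; and the $y^2$ term is negligible on $I(\ell)$. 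Under the hypotheses $\ell\ll t^2/x_t^2$ and $\ell\ll t/\log t$ one checks that $\frac{x_t^2\ell}{2t(t-\ell)}\to0$, $\frac{x_t y}{t-\ell}\to0$ uniformly for $y\in I(\ell)$, $\frac{3}{2\sqrt2}\frac{y\log t}{t}\to 0$, and $(t-\ell)^{3/2}/t^{3/2}\to1$, so that the whole exponential collapses to $\e^{-\sqrt2 x_t - x_t^2/2t + \frac{3}{2\sqrt2}\frac{x_t\log t}{t}}=\gamma_t(x_t)/x_t$ times $(1+o(1))$, uniformly in $y\in I(\ell)$. The $x_t$ prefactor then combines with $\gamma_t(x_t)/x_t$ to give $\gamma_t(x_t)$. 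This yields
\begin{equation*}
    \E\left[\Lambda_{t,I(\ell)}\right] \sim \sqrt{\tfrac{2}{\pi}}\,\gamma_t(x_t)\int_{a(\ell)}^{b(\ell)} y\,\e^{\sqrt2 y}\,\P\!\left(M_\ell > \tfrac{m_t}{t}\ell+y\right)\d{y},
\end{equation*}
once one also checks the constant $\sqrt{2/\pi}$ appears — it comes from matching the lower bound in Lemma~\ref{lem:brownian_estimate_1} (which has an implicit constant) against the exact formula~\eqref{eq:brownian_estimate_0}; more honestly, one should redo this step with~\eqref{eq:brownian_estimate_0} directly rather than the $\lesssim/\gtrsim$ version so the constant is exact, using that for $y\in I(\ell)$, $y=o(\sqrt\ell\,)\cdot b(\ell)/a(\ell)$... actually $a(\ell)\ll\sqrt\ell\ll b(\ell)$, so $1-\e^{-2x_1(x_2-y)/t}\sim 2x_1(x_2-y)/t$ holds throughout $I(\ell)$, giving the sharp constant.

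**Closing the two $\sim$'s.** It remains to show (i) the truncated version $\E[\Lambda_{t,I(\ell)}]$ is asymptotically the same as the full $\E[\Lambda_t]$, and (ii) the truncated integral over $[a(\ell),b(\ell)]$ can be replaced by $[0,\infty)$. For (ii): the integrand $y\,\e^{\sqrt2 y}\P(M_\ell>\frac{m_t}{t}\ell+y)$ is controlled by Corollary~\ref{cor:upper_bound_2}, which gives $\lesssim \frac{y(y+\log\ell)}{\ell^{3/2}}\e^{-y^2/2\ell+o(1)y}$ — integrable on $[0,\infty)$ with total mass $\asymp 1$, and the tails $[0,a(\ell)]$ and $[b(\ell),\infty)$ contribute $o(1)$ of that mass (the lower tail because $a(\ell)\ll\sqrt\ell$ and the integrand near $0$ is $O(y\log\ell/\ell^{3/2})$ so its integral is $O(a(\ell)^2\log\ell/\ell^{3/2})=o(\ell^{-1/2})$, while the full integral is $\gtrsim\ell^{-1/2}$ by Lemma~\ref{lem:lower_bound} rescaled; the upper tail because $b(\ell)\gg\sqrt\ell$ makes $\e^{-y^2/2\ell}$ tiny there). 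For (i): $\E[\Lambda_t] - \E[\Lambda_{t,I(\ell)}] = \E[\Lambda_{t,[0,a(\ell))}] + \E[\Lambda_{t,(b(\ell),\infty)}]$, and the same explicit computation as above — now using the upper bound in Lemma~\ref{lem:brownian_estimate_1} on $[0,a(\ell))$ and either Lemma~\ref{lem:brownian_estimate_1} or the trivial bound on the remaining tail, with the $y^2/2\ell$ Gaussian weight controlling large $y$ — shows both are $o(\gamma_t(x_t))$, hence $o(\E[\Lambda_t])$ by Lemma~\ref{lem:lower_bound_Lambda}. \textbf{The main obstacle} is getting the constant $\sqrt{2/\pi}$ exactly right: one must be careful to use the exact density~\eqref{eq:brownian_estimate_0} (with the $1-\e^{-2x_1(x_2-y)/t}\sim 2x_1(x_2-y)/t$ asymptotic valid uniformly on $I(\ell)$ thanks to $b(\ell)\ll t/x_t$) together with the exact Gaussian normalization $1/\sqrt{2\pi(t-\ell)}$, and track how the $\e^{t-\ell}$ from many-to-one cancels against the $\e^{-(t-\ell)}$ from Girsanov and the $\e^{-\widetilde g_t(t-\ell)^2/2(t-\ell)}$ reorganizes into $\gamma_t$; every other step is a routine uniform-in-$y$ Taylor expansion under the stated constraints on $\ell$.
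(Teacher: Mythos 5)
Your proposal follows essentially the same route as the paper: decompose $\E[\Lambda_{t,A}]$ via the many-to-one formula over the endpoint $y$ at time $t-\ell$, use the exact ballot density~\eqref{eq:brownian_estimate_0} with $1-\e^{-2x_ty/(t-\ell)}\sim 2x_ty/(t-\ell)$ uniformly on $I(\ell)$ (thanks to $b(\ell)\ll t/x_t$) to get the sharp constant $\sqrt{2/\pi}$, expand the Gaussian exponent uniformly under the constraints on $\ell$ and $b$, and kill the tails with Corollary~\ref{cor:upper_bound_2} and Lemma~\ref{lem:lower_bound_Lambda} — exactly the paper's argument, including your (correct) self-correction that the two-sided bounds of Lemma~\ref{lem:brownian_estimate_1} cannot produce the constant. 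One minor bookkeeping slip in your step (ii): the lower-tail contribution is $O\left((a(\ell)^3+a(\ell)^2\log\ell)/\ell^{3/2}\right)$, which is only $o(1)$, not $o(\ell^{-1/2})$; this is harmless because the full integral is in fact bounded below by a positive constant (as in the paper, this follows from $\sqrt{2/\pi}\,\gamma_t(x_t)$ times the integral over $I(\ell)$ being $\sim\E[\Lambda_{t,I(\ell)}]\gtrsim\gamma_t(x_t)$ by Lemma~\ref{lem:lower_bound_Lambda}), so the tails are still negligible.
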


\begin{proof}
    Using the Markov property, Lemma~\ref{lem:many-to-one}, and the Girsanov transformation~\eqref{eq:girsanov_transformation}, we rewrite $\E \left[ \Lambda_{t,A} \right]$ as
    \begin{equation}\label{eq:equivalent_expectation_decomposition}
        \int_A \e^{-\sqrt{2}(\tilde{g}_t(t-\ell) - y)} \P(\forall s \leq t-\ell, B_s \leq \tilde{g}_t(s), B_{t-\ell} \in \tilde{g}_t(t-\ell) - \d{y}) \P \left( M_\ell > \frac{m_t}{t}\ell + y \right),
    \end{equation}
    for any Borel set $A$.
    Then, to show the left-hand equivalent in~\eqref{eq:equivalent_expectation}, it suffices to show that~\eqref{eq:equivalent_expectation_decomposition} is negligible compared to $\E \left[ \Lambda_t \right]$ when $A = [0, \infty) \setminus [a(\ell), b(\ell)]$.
    By Lemma~\ref{lem:brownian_estimate_1},
    \begin{equation*}
        \P(\forall s \leq t-\ell, B_s \leq \tilde{g}_t(s), B_{t-\ell} \in \tilde{g}_t(t-\ell) - \d{y}) \lesssim \frac{x_t y}{(t-\ell)^{3/2}} \e^{-(\tilde{g}_t(t-\ell) - y)^2/2(t-\ell)} \d{y}.
    \end{equation*}
    Inserting this and Corollary~\ref{cor:upper_bound_2} into~\eqref{eq:equivalent_expectation_decomposition}, we obtain the new bound
    \begin{multline}\label{eq:equivalent_expectation_integral}
        \frac{x_t}{(t-\ell)^{3/2} \ell^{3/2}} \int_A \e^{-\sqrt{2}(\tilde{g}_t(t-\ell) - y)} y \e^{-(\tilde{g}_t(t-\ell) - y)^2/2(t-\ell)} (y + \log \ell) \e^{-\sqrt{2}y - \frac{y^2}{2\ell} + \frac{3}{2\sqrt{2}} \frac{y \log t}{t}} \d{y} \\
        \lesssim \gamma_t(x_t) \left( \ell^{-3/2} \int_A y(y + \log \ell) \e^{-y^2/2(t-\ell) + x_t y/(t-\ell) - y^2/2\ell} \d{y} \right),
    \end{multline}
    by definition of $\tilde{g}_t(t-\ell)$ and choice of $\ell$.
    But, since $a(\ell) \ll \sqrt{\ell} \ll t/x_t$,
    \begin{equation*}
        \ell^{-3/2} \int_0^{a(\ell)} y(y + \log \ell) \e^{-y^2/2(t-\ell) + x_t y/(t-\ell) - y^2/2\ell} \d{y} \lesssim \ell^{-3/2} \int_0^{a(\ell)} y(y + \log \ell) \d{y},
    \end{equation*}
    and this quantity vanishes as $t \to \infty$.
    Also,
    \begin{equation*}
        \ell^{-3/2} \int_{b(\ell)}^\infty y(y + \log \ell) \e^{-y^2/2(t-\ell) + x_t y/(t-\ell) - y^2/2\ell} \d{y} \lesssim \ell^{-3/2} \int_{b(\ell)}^\infty y^2 \e^{x_t y/(t-\ell) - y^2/2\ell} \d{y}.
    \end{equation*}
    With the change of variable $z = y/\sqrt{\ell}$, we see that this quantity vanishes as $t \to \infty$, by choice of $\ell$ and $b$.
    Besides, $\gamma_t(x_t) \lesssim \E \left[ \Lambda_t \right]$, by Lemma~\ref{lem:lower_bound_Lambda}.
    Subsequently, with $A = [0, \infty) \setminus [a(\ell), b(\ell)]$, we obtain that~\eqref{eq:equivalent_expectation_integral} is negligible with respect to $\E \left[ \Lambda_t \right]$, which yields the left-hand equivalent in~\eqref{eq:equivalent_expectation}.

    Let us show the right-hand equivalent in~\eqref{eq:equivalent_expectation}.
    By~\eqref{eq:brownian_estimate_0},
    \begin{equation*}
        \P \left( \forall s \leq t-\ell, B_s \leq \tilde{g}_t(s), B_{t-\ell} \in \tilde{g}_t(t-\ell) - \d{y} \right) = \left( 1 - \e^{-2 x_t y/(t-\ell)} \right) \frac{\e^{-(\tilde{g}_t(t-\ell) - y)^2/2(t-\ell)}}{\sqrt{2 \pi (t-\ell)}} \d{y},
    \end{equation*}
    on the domain $\{y > 0\}$.
    But $1 - \e^{-2 x_t y/(t-\ell)} \sim 2 x_t y/(t-\ell)$ as $t \to \infty$, and this equivalent is uniform in $y \in I(\ell)= [a(\ell), b(\ell)]$, by choice of $b$.
    Thus, in view of~\eqref{eq:equivalent_expectation_decomposition},
    \begin{align}
        \E \left[ \Lambda_{t,I(\ell)} \right] &\sim \int_{I(\ell)} \e^{-\sqrt{2}(\tilde{g}_t(t-\ell) - y)} \frac{2 x_t y}{(t-\ell)} \frac{\e^{-(\tilde{g}_t(t-\ell) - y)^2/2(t-\ell)}}{\sqrt{2 \pi (t-\ell)}} \P \left( M_\ell > \frac{m_t}{t}\ell + y \right) \d{y} \nonumber \\
        &= \sqrt{\frac{2}{\pi}} x_t \e^{-\sqrt{2}x_t - \frac{x_t^2}{2(t-\ell)} + \frac{3}{2\sqrt{2}}\frac{x_t \log t}{t}} \frac{\e^{\frac{3}{2}\frac{\log t}{t}(t-\ell)}}{(t-\ell)^{3/2}} \nonumber \\
        & \quad \times \int_{I(\ell)} y\e^{\sqrt{2}y} \e^{-\frac{9}{16}\frac{(\log t)^2}{t^2}(t-\ell) - \frac{y^2}{2(t-\ell)} + \frac{x_t y}{t-\ell} - \frac{3}{2\sqrt{2}}\frac{y \log t}{t}} \P \left( M_\ell > \frac{m_t}{t}\ell + y \right) \d{y} \nonumber \\
        &\sim \sqrt{\frac{2}{\pi}} \gamma_t(x_t) \int_{I(\ell)} y \e^{\sqrt{2}y} \P \left( M_\ell > \frac{m_t}{t}\ell + y \right) \d{y}, \label{eq:equivalent_expectation_integral_2}
    \end{align}
    since $\ell \ll \min(t/\log t, t^2/x_t^2)$ and $b(\ell) \ll \min(\sqrt{t}, t/x_t)$.
    Besides, similarly as above, Corollary~\ref{cor:upper_bound_2} implies that
    \begin{equation*}
        \sqrt{\frac{2}{\pi}} \gamma_t(x_t) \int_{[0, \infty) \setminus [a(\ell), b(\ell)]} y \e^{\sqrt{2}y} \P \left( M_\ell > \frac{m_t}{t}\ell + y \right) \d{y} \ll \gamma_t(x_t) \lesssim \E \left[ \Lambda_{t,I(\ell)} \right].
    \end{equation*}
    Thus, in~\eqref{eq:equivalent_expectation_integral_2}, we can replace the integration domain $I(\ell)$ with the whole $[0, \infty)$.
\end{proof}

\begin{proof}[Proof of Proposition~\ref{prop:equivalent_expectation}]
    By Proposition~\ref{prop:expectation_equivalent} and Lemma~\ref{lem:equivalent_expectation},
    \begin{equation*}
        \frac{\P(M_t > m_t + x_t)}{\gamma_t(x_t)} \sim \frac{\E \left[ \Lambda_t \right]}{\gamma_t(x_t)} \sim \sqrt{\frac{2}{\pi}} \int_{a(\ell)}^{b(\ell)} y \e^{\sqrt{2}y} \P \left( M_{\ell} > \frac{m_t}{t}\ell + y \right) \d{y},
    \end{equation*}
    for $a(s)$ and $b(s)$ any increasing functions such that $1 \ll a(\ell) \ll \sqrt{\ell} \ll b(\ell) \ll \min(\sqrt{t}, t/x_t)$.
    Applying successively the change of variable $z = y-\frac{3}{2\sqrt{2}}\frac{\log t}{t}\ell$ and Lemma~\ref{lem:equivalent_expectation}, we obtain that it is equivalent to
    \begin{equation*}
        \sqrt{\frac{2}{\pi}} \int_{a(\ell)-o(1)}^{b(\ell)-o(1)} z \e^{\sqrt{2}z} \P \left( M_{\ell} > \sqrt{2}\ell + z \right) \d{z} \sim \sqrt{\frac{2}{\pi}} \int_0^\infty z \e^{\sqrt{2}z} \P \left( M_{\ell} > \sqrt{2}\ell + z \right) \d{z}.
    \end{equation*}
    Let us denote by $f(\ell)$ this latter quantity.
    Since $\P(M_t > m_t + x_t)/\gamma_t(x_t)$ does not depend on the choice of $(\ell_t)_{t \geq 0}$, for any other $(\ell_t')_{t \geq 0}$ going to infinity slowly enough, we have $f(\ell_t) \sim f(\ell'_t)$ as $t \to \infty$.
    Then, an argument by contradiction shows that $f(s)$ converges to some constant $C^* \in (0,\infty)$ as $s \to \infty$.
    Hence, $\E \left[ \Lambda_t \right] \sim C^* \gamma_t(x_t)$.
\end{proof}

\begin{acks}
    I am grateful to Michel Pain for his help and careful reading.
    I also thank Bastien Mallein for his advice, which helped simplify the proof.
    Finally, I acknowledge the anonymous referees for their helpful comments and suggested corrections, which have improved the exposition of this paper.
\end{acks}

\bibliographystyle{amsplain}
\bibliography{biblio}

\end{document}